\numberwithin{equation}{section}
\theoremstyle{plain}
\newtheorem{thm}{Theorem}[section]
\newtheorem{prop}[thm]{Proposition}
\newtheorem{lem}[thm]{Lemma}
\newtheorem{rem}[thm]{Remark}
\def\tri{\triangle}
\DeclareMathOperator{\Tr}{Tr}
\DeclareMathOperator{\vol}{Vol}
\title[Volume gap between the minimal submanifold and the unit sphere]{Volume gap between the minimal submanifold and the unit sphere}
\author[W.R. Ding]{Weiran Ding$^{1}$}
\address{$^{1}$School of Mathematical Sciences, Laboratory of Mathematics and Complex Systems, Beijing Normal University, Beijing 100875, P. R. CHINA.}
\email{dingwr0806@mail.bnu.edu.cn}
\author[J.Q. Ge]{Jianquan Ge$^{2}$}
\address{$^{2}$School of Mathematical Sciences, Laboratory of Mathematics and Complex Systems, Beijing Normal University, Beijing 100875, P. R. CHINA.}
\email{jqge@bnu.edu.cn}
\author[F.G. Li]{Fagui Li$^{3,*}$}
\address{$^{3,*}$Greater Bay Area Innovation Research Institute, Beijing Institute of Technology, Zhuhai, Guangdong 519088, P. R. CHINA.}
\email{lifagui@bitzh.edu.cn}
\subjclass[2010]{53C42, 53C24.}
\date{}
\keywords{volume gap, minimal submanifolds, the Yau conjecture, sphere.}
\thanks{J. Q. Ge is partially supported by NSFC (No. 12171037) and the Fundamental Research Funds for the Central Universities.}
\thanks{F. G. Li is partially supported by NSFC (No. 12171037, 12271040) and China Postdoctoral Science Foundation (No. 2022M720261).}
\begin{document}
\maketitle

\begin{abstract}
	In 1984, Cheng-Li-Yau [Amer. J. Math. \textbf{106} (1984), 1033--1065] proved that for a compact minimally immersed submanifold $M^n$ of $\mathbb{S}^{n+\ell}$ there exists a positive constant $B_n$ depending only on $n$ such that $$\vol(M)>\left(1+\frac{2\ell-1}{B_n}\right)\vol(\mathbb{S}^n).$$ In this paper, following the method of Cheng-Li-Yau, we first modify the coefficients in the constant $B_n$ to improve the volume gap. Further, we also enlarge our gap by applying an estimate of Cheng-Yang [Math. Ann., \textbf{331} (2005), 445--460] for eigenvalues of Laplacian.
\end{abstract}

\section{Introduction}
Let $M^n$ be a $n$-dimensional minimally immersed submanifold of $\mathbb{S}^{n+\ell}$, where $\ell\ge1$. In 1984, Cheng-Li-Yau \cite{Cheng Li Yau 1984 Heat equations} proved the following volume gap theorem. This theorem means that there is a volume gap for minimal submanifolds in spheres.
\begin{thm}[Cheng-Li-Yau \cite{Cheng Li Yau 1984 Heat equations}]\label{Cheng Li Yau 1984 Heat equations}
	Let $M$ be a compact minimally immersed submanifold of $\mathbb{S}^{n+\ell}$. Suppose the immersion is of maximal dimension, then there exists a positive constant
	\begin{equation*}
		B_n<2n+3+2\exp(2nC_n)
	\end{equation*}
	such that
	\begin{equation*}
		\vol(M)>\left(1+\frac{2\ell-1}{B_n}\right)\vol(\mathbb{S}^n)
	\end{equation*}
	where
	\begin{equation*}
		C_n=\frac{n^{n/2}e\Gamma(n/2,1)}{2},\quad\Gamma\left(\frac{n}{2},1\right)=\int_{1}^\infty e^{-t}t^{\frac{n}{2}-1}~dt
	\end{equation*}
	and $\ell$ is the co-dimension of the immersion in $\mathbb{S}^{n+\ell}$.
\end{thm}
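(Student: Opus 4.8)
The plan is to recast the volume estimate as a comparison of heat traces, fed by two inputs: a lower bound for the multiplicity of the eigenvalue $n$ that comes for free from minimality, and a universal (dimension-only) upper bound for the heat kernel of a minimal submanifold.

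First I would invoke Takahashi's theorem: for a minimal immersion $\iota\colon M^n\hookrightarrow\mathbb{S}^{n+\ell}\subset\mathbb{R}^{n+\ell+1}$ one has $\Delta\iota=-n\iota$, so each of the $n+\ell+1$ coordinate functions is an eigenfunction of $-\Delta$ with eigenvalue $n$, and since $\int_M x_a=-\tfrac1n\int_M\Delta x_a=0$ they are orthogonal to the constants. Because the immersion is of maximal dimension these functions are linearly independent on $M$, so the eigenspace of the eigenvalue $n$ has dimension at least $n+\ell+1$. Writing $0=\lambda_0<\lambda_1\le\cdots$ for the spectrum of $-\Delta$ with orthonormal eigenfunctions $\phi_i$, the heat kernel is $H(x,y,t)=\sum_i e^{-\lambda_i t}\phi_i(x)\phi_i(y)$, and its trace $Z(t):=\int_M H(x,x,t)\,dx=\sum_i e^{-\lambda_i t}$ satisfies, upon discarding the nonnegative contribution of every eigenvalue other than $0$ and $n$,
\[
Z(t)\ge 1+(n+\ell+1)e^{-nt}\qquad(t>0).
\]

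The heart of the matter is a matching upper bound $Z(t)\le\vol(M)\,\bar H_n(t)$ with a comparison profile $\bar H_n$ depending only on $n$ and $t$, which I would obtain from a pointwise estimate $H(x,x,t)\le\bar H_n(t)$ in the Cheng--Li--Yau spirit. The monotonicity formula for minimal submanifolds --- available because the cone over $M$ is minimal in $\mathbb{R}^{n+\ell+1}$, equivalently because the mean curvature of $M$ in $\mathbb{R}^{n+\ell+1}$ has unit length --- yields a Euclidean density bound $\vol\big(M\cap B_r(x)\big)\ge\omega_n r^n$ for small extrinsic balls. Coupling this with the parabolic mean value inequality for subsolutions of the heat equation bounds $H(x,x,t)$ by an integral of the model kernel $e^{-r^2/4t}$ against this density over the region where the extrinsic distance exceeds a fixed multiple of $\sqrt t$; that integral is precisely an incomplete Gamma integral $\int_1^\infty e^{-u}u^{n/2-1}\,du=\Gamma(n/2,1)$, which is exactly where $C_n=\tfrac12 n^{n/2}e\,\Gamma(n/2,1)$ appears and where $\bar H_n$ acquires its explicit shape, normalised so that $\vol(\mathbb{S}^n)\,\bar H_n(t)\to1$ as $t\to\infty$. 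I expect this step --- extracting a clean, dimension-only constant while tracking it sharply enough to recover $C_n$ --- to be the main obstacle; everything else is bookkeeping.

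Finally I would combine the bounds into $\vol(M)\ge\big(1+(n+\ell+1)e^{-nt}\big)/\bar H_n(t)$ for all $t>0$ and optimise the right-hand side. Since $\vol(\mathbb{S}^n)\,\bar H_n(t)=1+O(e^{-nt})$ as $t\to\infty$ while $\bar H_n(t)\sim c_n t^{-n/2}\to\infty$ as $t\to0$, the ratio exceeds $1$ on a range of $t$ and has a maximum strictly above $1$; estimating this maximum with the explicit $\bar H_n$ turns the excess multiplicity of $M$ over the round $\mathbb{S}^n$ into the quantitative gain. Calibrating the normalisation against the round sphere, for which $\vol(\mathbb{S}^n)=\lim_{t\to\infty}Z(t)/\bar H_n(t)$, and carrying the constants through the optimisation yields $\vol(M)>\big(1+\tfrac{2\ell-1}{B_n}\big)\vol(\mathbb{S}^n)$ with $B_n<2n+3+2\exp(2nC_n)$, the summand $2n+3$ recording the round sphere's own eigenvalue-$n$ contribution and $2\exp(2nC_n)$ the cost of the heat-kernel comparison.
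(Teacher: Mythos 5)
Your overall architecture --- Takahashi's theorem giving multiplicity at least $n+\ell+1$ for the eigenvalue $n$, the trace lower bound $Z(t)\ge 1+(n+\ell+1)e^{-nt}$, and a matching upper bound $Z(t)\le\frac{\vol(M)}{\vol(\mathbb{S}^n)}\Tr\bar K(t)$ followed by an optimal choice of $t$ --- is exactly the skeleton of the Cheng--Li--Yau argument (Propositions \ref{thm5}, \ref{lem3} and \ref{thm6} of the paper). The gap is in the step you yourself flag as the main obstacle, and as you describe it the step would fail. The needed upper bound is not obtained from the monotonicity formula plus a parabolic mean value inequality; it is the comparison $K(p,y,t)\le\bar K(r_p(y),t)$ with $\bar K$ the heat kernel of the \emph{round} $\mathbb{S}^n$ (Proposition \ref{thm5}), proved by the parabolic maximum principle (Proposition \ref{prop1}) using that $y\mapsto\bar K(r_p(y),t)$ is a supersolution of the heat equation on $M$; this rests on the Laplacian comparison for the extrinsic distance function restricted to a minimal submanifold of the sphere together with the radial monotonicity of $\bar K$. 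A density-plus-mean-value argument of the kind you sketch yields a Euclidean-type on-diagonal bound $H(x,x,t)\lesssim c_nt^{-n/2}$ (or $c_n/\vol(M\cap B_{\sqrt t}(x))$), which tends to $0$ as $t\to\infty$ and, more to the point, does not carry the structure $\frac{1}{\vol(\mathbb{S}^n)}\bigl(1+(n+1)e^{-nt}+C_nt^{-1}e^{-nt}\bigr)$ that the proof hinges on: the gap is extracted by cancelling the leading $1$ and the $(n+1)e^{-nt}$ of the upper bound against the corresponding terms of the lower bound so that only the $\ell$-dependent excess survives, and this forces the comparison model to be the compact sphere, not Euclidean space. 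The normalisation ``$\vol(\mathbb{S}^n)\bar H_n(t)\to1$'' that you postulate is precisely what a Euclidean mean-value bound cannot deliver.

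Relatedly, you have misplaced the origin of $C_n$: the incomplete Gamma function does not arise from integrating $e^{-r^2/4t}$ against a density over the region where $r\gtrsim\sqrt t$, but from the purely spectral estimate of $\Tr\bar K(t)=\sum_k m_ke^{-k(k+n-1)t}$ on $\mathbb{S}^n$ (Proposition \ref{lem3}): the modes $k\ge2$ are controlled by comparison with $\int_1^\infty s^{n-1}e^{-s^2t}\,ds=\tfrac12t^{-n/2}\Gamma(n/2,t)\le\tfrac12t^{-n/2}\Gamma(n/2,1)$ for $t\ge1$, which is where $C_n=\tfrac12n^{n/2}e\,\Gamma(n/2,1)$ comes from. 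Once Propositions \ref{thm5} and \ref{lem3} are in hand, your endgame is correct bookkeeping and matches Proposition \ref{thm6}: inserting $k\ge n+\ell+1$ and taking $t=2nC_n$, so that $nC_nt^{-1}=\tfrac12$, gives $\frac{\vol(M)}{\vol(\mathbb{S}^n)}\ge 1+\frac{\ell-1/2}{e^{2nC_n}+n+3/2}=1+\frac{2\ell-1}{2e^{2nC_n}+2n+3}$.
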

In 1992, Yau put forward the following famous conjecture (also called the Solomon-Yau conjecture \cite{Ge19,IW15}) in his Problem Section \cite{Yau  1992} about the second smallest volume: \emph{the volume of one of the minimal Clifford torus $M_{k,n-k}=S^k(\sqrt{\frac{k}{n}})\times S^{n-k}(\sqrt{\frac{n-k}{n}}),1\le k\le n-1$ gave the lowest value of volume among all non-totally geodesic closed minimal hypersurfaces of $\mathbb{S}^{n+1}$.} For $n=2$, the Yau conjecture is true, due to Li-Yau \cite{Li and Yau 1982}, Calabi \cite{Calabi 1967} and Marques-Neves \cite{Marques and Neves 2014}. For $n\geq 3$, there are many partial results. The Yau conjecture was verified for minmal rotational hypersurfaces by Perdomo-Wei \cite{Perdomo and Wei 2015}, Cheng-Wei-Zeng \cite{Cheng Qing Ming Guoxin Wei and Yuting Zeng 2019} and Cheng-Lai-Wei \cite{Cheng Qing Ming  Junqi Lai and Guoxin Wei 2024}. Ilmanen-White \cite{IW15} proved the Yau conjecture in the asymptotic sense under some conditions. Viana \cite{Celso Viana 2023} confirmed the Yau Conjecture in the class of minimal hypersurfaces separating $\mathbb{S}^n$ in two connected regions which are both antipodal invariant. Recently, Ge-Li \cite{Ge Li 2022 Solomon-Yau conj} and Nguyen \cite{Nguyen 2023} proved the Yau conjecture for  nonembedded hypersurfaces, independently. Under some curvature conditions, Ge-Li \cite{Ge Li 2022 Solomon-Yau conj} also got some better results of the gap for embedded hypersurfaces. In this paper, without any assumptions of the immersion and the curvature, we prove the following results:
\begin{thm}\label{1st}
	Let $M$ be a compact minimally immersed submanifold of $\mathbb{S}^{n+\ell}$. Suppose the immersion is of maximal dimension, then there exists a positive constant
	\begin{equation*}
		B_{n,\alpha}<\alpha n+\alpha+1+\alpha\exp(\alpha nC_n)
	\end{equation*}
	with $\alpha=1.43$ such that
	\begin{equation*}
		\vol(M)>\left(1+\frac{\alpha\ell-1}{B_{n,\alpha}}\right)\vol(\mathbb{S}^n)>\left(1+\frac{1.65(2\ell-1)}{B_n}\right)\vol(\mathbb{S}^n).
	\end{equation*}
\end{thm}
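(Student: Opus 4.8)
The plan is to follow the heat-equation method of Cheng--Li--Yau, working with the heat trace $Z(t)=\sum_{k\ge0}e^{-\lambda_k t}=\int_M H(x,x,t)\,dV$, where $H$ is the heat kernel of $M$ and $\{\lambda_k\}$ its Laplace spectrum. The lower bound is spectral: since $M$ is minimal in $\mathbb{S}^{n+\ell}$, the $n+\ell+1$ Euclidean coordinate functions restrict to eigenfunctions of eigenvalue $n$, and the maximal-dimension hypothesis makes them linearly independent (in particular it forces $n\ge2$, since a closed geodesic always lies in a great $\mathbb{S}^1$ and so is never full when $\ell\ge1$). Hence the eigenvalue $n$ has multiplicity at least $n+\ell+1$, and together with the constant eigenfunction this gives $Z(t)\ge 1+(n+\ell+1)e^{-nt}$ for all $t>0$. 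The upper bound is geometric: I will re-derive the Cheng--Li--Yau heat-kernel estimate for minimal submanifolds of the sphere, in the normalized form $Z(t)\le \frac{\vol(M)}{\vol(\mathbb{S}^n)}\bigl(1+(n+1)e^{-nt}+E(t)\bigr)$, where $1+(n+1)e^{-nt}$ is the leading part of the trace of the round $\mathbb{S}^n$ and $E(t)$ is the higher-mode error, controlled through the constant $C_n$.

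The crucial observation is that Cheng--Li--Yau evaluate these two bounds at the fixed time $t=2C_n$; the repeated coefficient $2$ in $B_n=2n+3+2\exp(2nC_n)$ and in the numerator $2\ell-1$ is nothing but this choice. I will instead keep the evaluation time free, writing $t=\alpha C_n$. Combining the two bounds at this time gives $\frac{\vol(M)}{\vol(\mathbb{S}^n)}\ge \frac{1+(n+\ell+1)e^{-n\alpha C_n}}{1+(n+1)e^{-n\alpha C_n}+E(\alpha C_n)}$; multiplying numerator and denominator by $q:=\exp(\alpha nC_n)$, using the tail estimate $E(\alpha C_n)\le \tfrac1\alpha\,q^{-1}$ so that $qE(\alpha C_n)\le\tfrac1\alpha$, and clearing the factor $\tfrac1\alpha$ converts this into $\frac{\vol(M)}{\vol(\mathbb{S}^n)}-1\ge \frac{\alpha\ell-1}{B_{n,\alpha}}$ with $B_{n,\alpha}=\alpha(n+1)+1+\alpha\exp(\alpha nC_n)$, exactly the asserted constant; the dominant term $\alpha\exp(\alpha nC_n)$ is the normalized contribution of the zero mode at time $\alpha C_n$, and the choice $\alpha=2$ recovers Theorem \ref{Cheng Li Yau 1984 Heat equations}.

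It remains to choose $\alpha$. Since $B_{n,\alpha}$ is independent of $\ell$, the improvement factor splits as $\frac{\alpha\ell-1}{B_{n,\alpha}}\cdot\frac{B_n}{2\ell-1}=\frac{\alpha\ell-1}{2\ell-1}\cdot\frac{B_n}{B_{n,\alpha}}$. The first factor is increasing in $\ell$ for $\alpha<2$ (its $\ell$-derivative has the sign of $2-\alpha$), and the second is increasing in $n$ because the exponential term dominates; hence the guaranteed factor is minimized at $n=2$, $\ell=1$. There $C_2=e\,\Gamma(1,1)=1$, so the factor becomes $(\alpha-1)\,B_2/B_{2,\alpha}$ with $B_{2,\alpha}=3\alpha+1+\alpha e^{2\alpha}$; a one-variable maximization over $\alpha$ yields the optimum near $\alpha=1.43$ with value $\approx1.65$. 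This single choice simultaneously produces the new gap $\vol(M)>\bigl(1+\tfrac{\alpha\ell-1}{B_{n,\alpha}}\bigr)\vol(\mathbb{S}^n)$ and, by the extremal property just described, the comparison $\tfrac{\alpha\ell-1}{B_{n,\alpha}}>\tfrac{1.65(2\ell-1)}{B_n}$ for all admissible $n\ge2$ and $\ell\ge1$.

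The \textbf{main obstacle} is the upper bound at the reduced time. Cheng--Li--Yau's choice $t=2C_n$ leaves comfortable room in the heat-kernel tail estimate; pushing the evaluation time down to $t=1.43\,C_n$ requires verifying that the higher-mode error still obeys $E(\alpha C_n)\le \tfrac1\alpha e^{-n\alpha C_n}$ uniformly in $n\ge2$. This is exactly the step where the explicit shape of $C_n=\tfrac12 n^{n/2}e\,\Gamma(n/2,1)$—essentially a Gaussian tail beyond radius one—must be exploited, and it is the only place where a genuinely new estimate, rather than a calculus optimization, is needed; the monotonicity statements used to isolate the extremal case $(n,\ell)=(2,1)$ are routine but must also be checked carefully.
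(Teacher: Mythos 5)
Your proposal is correct and follows essentially the same route as the paper: free the evaluation time in Cheng--Li--Yau's trace inequality (their Theorem~6), taking $t=\alpha nC_n$ in place of $2nC_n$ to get $B_{n,\alpha}=\alpha n+\alpha+1+\alpha e^{\alpha nC_n}$, reduce to the extremal case $(n,\ell)=(2,1)$ by the same monotonicity in $\ell$ and $n$, and optimize the one-variable function $(\alpha-1)/(3\alpha+1+\alpha e^{2\alpha})$ to land near $\alpha=1.43$ and the factor $1.65$. The only substantive comment is that the step you single out as the ``main obstacle'' is not one: the tail bound of Cheng--Li--Yau's Lemma~3 already reads $C_nt^{-1}e^{-nt}$ for \emph{all} $t\ge1$, and since $\alpha C_n\ge\alpha C_2=1.43>1$ the identity $E(\alpha C_n)=\tfrac1\alpha e^{-\alpha nC_n}$ comes for free, so no new heat-kernel estimate is required---only the elementary calculus you describe.
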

\begin{rem}
	It should be noted that our gap is much larger than the original volume gap given by Cheng-Li-Yau \cite{Cheng Li Yau 1984 Heat equations}. Moreover, the improvement becomes better as $n$ and $\ell$ increase.
\end{rem}
Applying the estimate of the Laplacian eigenvalues by Cheng-Yang \cite{ChY}, we improved our gap to the following:
\begin{thm}\label{2nd}
	Let $M$ be a compact minimally immersed submanifold of $\mathbb{S}^{n+\ell}$. Suppose the immersion is of maximal dimension, then there exist positive constants
	\begin{equation*}
		B_{n,\alpha}<\alpha n+\alpha+1+\alpha\exp(\alpha nC_n)
	\end{equation*}
	and constant $E_{n,\ell}=\alpha nC_n-\alpha nC_n(n+4)(n+2\ell)^{\frac{2}{n}}4^{\frac{1}{n}}$ with $\alpha=1.43$ such that
	\begin{equation*}
		\begin{aligned}
			\vol(M)&>\left(1+\frac{\alpha\ell-1+\alpha(n+\ell+2)e^{E_{n,\ell}}}{B_{n,\alpha}}\right)\vol(\mathbb{S}^n)\\
			&>\left(1+\frac{1.65(2\ell-1)}{B_n}+\frac{1.65(n+\ell+2)e^{E_{n,\ell}}}{\ell}\frac{2\ell-1}{B_n}\right)\vol(\mathbb{S}^n).
		\end{aligned}
	\end{equation*}
\end{thm}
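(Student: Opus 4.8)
The plan is to run the Cheng--Li--Yau heat-trace comparison with the evaluation time kept as a free parameter, and then to insert the Cheng--Yang universal eigenvalue inequality to sharpen the lower bound on the trace. Write $0=\lambda_0<\lambda_1\le\lambda_2\le\cdots$ for the Laplace spectrum of $M$ and let $H(x,y,t)$ be its heat kernel, so that $Z(t):=\sum_{i\ge0}e^{-\lambda_it}=\int_M H(x,x,t)\,dx$. Because the immersion $M\hookrightarrow\sphere^{n+\ell}\subset\mathbb{R}^{n+\ell+1}$ is minimal and of maximal dimension, Takahashi's theorem shows that its $n+\ell+1$ coordinate components are eigenfunctions with eigenvalue $n$; hence $n$ is an eigenvalue of multiplicity at least $n+\ell+1$, and since replacing any eigenvalue below $n$ by $n$ only decreases the corresponding exponential, one obtains the lower bound $Z(t)\ge 1+(n+\ell+1)e^{-nt}$.

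For the matching upper bound I would reproduce the Cheng--Li--Yau on-diagonal estimate, comparing the heat kernel of the minimal submanifold with that of the round sphere: $H(x,x,t)\le H_{\sphere^n}(t)=Z_{\sphere^n}(t)/\vol(\sphere^n)$, the (constant) on-diagonal heat kernel of $\sphere^n$, for every $t>0$. Integrating over $M$ and isolating the first two terms of $Z_{\sphere^n}(t)=1+(n+1)e^{-nt}+\sum_{k\ge2}m_ke^{-k(k+n-1)t}$, while bounding the spectral tail by $\tfrac{C_n}{t}e^{-nt}$ through the incomplete Gamma function $\Gamma(n/2,1)$, yields
\[
Z(t)\le\frac{\vol(M)}{\vol(\sphere^n)}\left[1+\Big(n+1+\tfrac{C_n}{t}\Big)e^{-nt}\right].
\]
Setting $t=\alpha C_n$, so that $e^{-nt}=e^{-\alpha nC_n}$ and $C_n/t=1/\alpha$, and dividing the lower bound by the upper bound gives
\[
\frac{\vol(M)}{\vol(\sphere^n)}\ge\frac{1+(n+\ell+1)e^{-\alpha nC_n}}{1+\big(n+1+\tfrac1\alpha\big)e^{-\alpha nC_n}}=1+\frac{\alpha\ell-1}{\alpha(n+1)+1+\alpha\exp(\alpha nC_n)},
\]
which is Theorem~\ref{1st} with $B_{n,\alpha}=\alpha(n+1)+1+\alpha\exp(\alpha nC_n)$; the value $\alpha=1.43$ is then chosen because shrinking $\alpha$ below $2$ collapses the dominant term $\alpha\exp(\alpha nC_n)$ far faster than it erodes the numerator $\alpha\ell-1$.

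To obtain Theorem~\ref{2nd} I would enlarge the lower bound on $Z(t)$ by also accounting for the eigenvalues lying just above the Takahashi value $n$. Taking $\lambda_1\le n$ as input, the Cheng--Yang inequality \cite{ChY} furnishes an explicit upper bound $\Lambda_{n,\ell}:=n(n+4)(n+2\ell)^{2/n}4^{1/n}$ valid for a block of $n+\ell+2$ further eigenvalues, so that
\[
Z(t)\ge 1+(n+\ell+1)e^{-nt}+(n+\ell+2)e^{-\Lambda_{n,\ell}t}.
\]
At $t=\alpha C_n$ the last exponential equals $e^{-nt}e^{E_{n,\ell}}$ with $E_{n,\ell}=\alpha nC_n-\alpha nC_n(n+4)(n+2\ell)^{2/n}4^{1/n}$, and carrying this augmented lower bound through the identical algebra promotes the numerator $\alpha\ell-1$ to $\alpha\ell-1+\alpha(n+\ell+2)e^{E_{n,\ell}}$; the last displayed inequality of the theorem is then an elementary numerical comparison of the two fractions.

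The step I expect to be the main obstacle is pinning down the normalized heat-kernel bound with the sharp constant $C_n=\tfrac12 n^{n/2}e\,\Gamma(n/2,1)$: this rests on genuinely using the minimality of $M$ (equivalently $\Delta_M x=-nx$ for the position vector) to justify the comparison with the round $\sphere^n$, together with a careful estimate of the sphere's spectral tail in terms of $\Gamma(n/2,1)$. A secondary difficulty is the index bookkeeping in the Cheng--Yang step, where one must guarantee that $n+\ell+2$ eigenvalues can be placed below $\Lambda_{n,\ell}$ so that the extra term is positive, and finally one must verify the purely arithmetic inequalities certifying both that $\alpha=1.43$ improves on $\alpha=2$ and that the resulting gap dominates $\tfrac{1.65(2\ell-1)}{B_n}$ uniformly in $n$ and $\ell$.
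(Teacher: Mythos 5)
Your proposal is correct and follows the same skeleton as the paper: the Cheng--Li--Yau trace comparison $\sum_i e^{-\lambda_i t}\le \frac{\vol(M)}{\vol(\sphere^n)}\bigl(1+(n+1+C_n/t)e^{-nt}\bigr)$, the Takahashi multiplicity bound $k\ge n+\ell+1$, the evaluation at $t=\alpha C_n$ with $\alpha=1.43$, and a Cheng--Yang augmentation of the trace lower bound. The one place you genuinely diverge is precisely the step you flag as the ``index bookkeeping.'' The paper does \emph{not} establish your inequality $Z(t)\ge 1+(n+\ell+1)e^{-nt}+(n+\ell+2)e^{-\Lambda_{n,\ell}t}$ in one stroke; it writes $n=\lambda_k$, splits into the cases $n+\ell+1\le k\le n+2\ell$ (where Cheng--Yang applied to $\lambda_{2k+1}\le D_n(2k)^{2/n}\lambda_1$ yields the exponent $\Lambda_{n,\ell}$) and $k\ge n+2\ell+1$ (where it falls back on the cruder count $1+ke^{-t}$ via Proposition \ref{thm6}), and then reconciles the two resulting gaps by verifying the auxiliary inequality $(n+\ell+2)e^{E_{n,\ell}}<\ell$ in \eqref{finaline}. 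Your unconditional formulation is in fact available and renders that case analysis unnecessary: the multiplicity of the eigenvalue $n$ forces $\lambda_{n+\ell+1}\le n$, and Cheng--Yang with index $m=2(n+\ell+1)$ gives $\lambda_{2n+2\ell+3}\le (n+4)\bigl(2(n+\ell+1)\bigr)^{2/n}\lambda_1\le n(n+4)\bigl(2(n+2\ell)\bigr)^{2/n}=\Lambda_{n,\ell}$, using $2(n+\ell+1)\le 2(n+2\ell)$ for $\ell\ge1$; hence the block $\lambda_{n+\ell+2},\dots,\lambda_{2n+2\ell+3}$ supplies exactly $n+\ell+2$ eigenvalues below $\Lambda_{n,\ell}$ regardless of where the eigenvalue $n$ sits in the spectrum. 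Once you record this, your route is slightly cleaner than the paper's, and the remaining steps---carrying the extra term through the same algebra and multiplying the ratio estimate of Theorem \ref{1st} by $1+(n+\ell+2)e^{E_{n,\ell}}/\ell$ to get the $1.65$-comparison---are elementary, as you say. One loose end you share with the paper: you take $\lambda_1\le n$ where the paper invokes $\lambda_1<n$; this only affects whether the final inequality is strict.
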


\section{The Heat Kernel}
First we introduce the basic definitions and propositions of heat kernel and summarize the comparison results by Cheng-Li-Yau \cite{Cheng Li Yau 1984 Heat equations}. Let $M^n$ be a $n$-dimensional minimally immersed submanifold of $\mathbb{S}^{n+\ell}$. For $p\in M$, let $r_p(x)$ be the distance function on $\mathbb{S}^{n+\ell}$ and we denote the restriction of $r_p$ to $M$ as the extrinsic distance function on $M$. For any $a>0$, we define the extrinsic ball centered at $p$ with radius $a$ by
\begin{equation*}
	D_p(a)=B_p(a)\cap M
\end{equation*}
where $B_p(a)=\{x\in\mathbb{S}^{n+\ell}~|~r_p(x)\le a\}$. Let $D\subset M$ be a compact domain. For any $p\in D$, the extrinsic outer radius at $p$ is defined by $a=\sup_{z\in D}r_p(z)$. We denote the heat kernels for the Dirichlet and the Neumann boundary conditions by $H(x,y,t)$ and $K(x,y,t)$ respectively. The following basic properties are well known:
\begin{enumerate}
	\item[(i)] for all $x,y\in D$ and $t\in[0,\infty)$, we have $\square_yH(x,y,t)=\square_yK(x,y,t)=0$;
	\item[(ii)] for all $x\in D$, we have $H(x,y,0)=K(x,y,0)=\delta_x$;
	\item[(iii)] for all $z\in\partial D$, we have $H(x,z,t)=0$ and $$\dfrac{\partial}{\partial\nu_z}K(x,z,t)=0,$$ where $\partial/\partial\nu_z$ stands for the differentiation in the $z$ variable in the outward normal direction to $\partial D$.
\end{enumerate}
To introduce the comparison results of the minimal submanifolds, Cheng-Li-Yau \cite{Cheng Li Yau 1984 Heat equations} first presented the following proposition.
\begin{prop}[Proposition 1 in \cite{Cheng Li Yau 1984 Heat equations}]\label{prop1}
	Let $M$ be a compact manifold with boundary $\partial M$. Suppose $p\in M$, and if $G(p,y,t),\bar{G}(p,y,t):M\times M\times[0,\infty)\to\mathbb{R}$ are $C^2$ functions with the properties that:
	\begin{enumerate}
		\item[(i)] for all $y\in M$ and $t\in[0,\infty)$, we have $G(p,y,t)\ge0$;
		\item[(ii)] for all $y\in M$, we have $G(p,y,0)=\bar{G}(p,y,0)=\delta_p$;
		\item[(iii)] for all $y\in M$ and $t\in[0,\infty)$, we have $\square_yG(p,y,t)=0$ and $\square_y\bar{G}(p,y,t)\le0$;
		\item[(iv)] for all $z\in\partial M$ and $t\in[0,\infty)$, we have $G(p,z,t)=0$ and $\bar{G}(p,z,t)\ge0$ or $$\dfrac{\partial}{\partial\nu_z}G(p,z,t)=\dfrac{\partial}{\partial\nu_z}\bar{G}(p,z,t)=0.$$
	\end{enumerate}
	Then $$G(p,y,t)\le\bar{G}(p,y,t)$$ for all $y\in M$ and $t\in[0,\infty)$.
\end{prop}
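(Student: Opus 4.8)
The plan is to prove the comparison by exhibiting a \emph{monotone quantity} in time and reading off the conclusion from its two one-sided limits, which is the route compatible with the Cheng--Li--Yau method (in particular it is the route that genuinely uses hypothesis (i)). Throughout I use the convention $\square_y=\Delta_y-\partial_t$, so that $\square_yG=0$ says $G$ solves the heat equation $\partial_tG=\Delta_yG$, while $\square_y\bar G\le 0$ says $\partial_t\bar G\ge\Delta_y\bar G$, i.e. $\bar G$ is a \emph{supersolution}; this is exactly the sign that makes $\bar G$ an upper barrier, consistent with the desired inequality. Regarding $G$ as the genuine heat kernel (so that it is symmetric, $G(x,y,t)=G(y,x,t)$, and satisfies the hypotheses in either spatial slot, as do $H$ and $K$), I fix a target point $y\in M$ and a time $t>0$ and set, for $0<s<t$,
\[
A(s):=\int_M G(y,x,t-s)\,\bar G(p,x,s)\,dx .
\]

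Differentiating and substituting the two evolution relations gives the key estimate. Using $\partial_\tau G=\Delta_xG$ and $\partial_s\bar G\ge\Delta_x\bar G$ together with $G\ge 0$ from (i), I obtain
\[
A'(s)=\int_M\bigl(-\Delta_xG\cdot\bar G+G\cdot\partial_s\bar G\bigr)\,dx\ \ge\ \int_M\bigl(G\,\Delta_x\bar G-\bar G\,\Delta_xG\bigr)\,dx ,
\]
and Green's identity converts the right-hand side into a boundary integral $\int_{\partial M}\bigl(G\,\partial_\nu\bar G-\bar G\,\partial_\nu G\bigr)\,d\sigma$. Now I analyze the two alternatives in (iv). In the Dirichlet case $G=0$ and $\bar G\ge 0$ on $\partial M$; since $G\ge 0$ in the interior and vanishes on $\partial M$, its outward normal derivative satisfies $\partial_\nu G\le 0$, so the integrand equals $-\bar G\,\partial_\nu G\ge 0$. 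In the Neumann case $\partial_\nu G=\partial_\nu\bar G=0$, so the boundary integral vanishes. In either case $A'(s)\ge 0$, so $A$ is non-decreasing on $(0,t)$.

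It remains to identify the endpoints by letting the kernels concentrate. As $s\to 0^+$ the factor $\bar G(p,\cdot,s)$ approaches $\delta_p$ while $G(y,\cdot,t-s)$ stays continuous, giving $A(0^+)=G(y,p,t)=G(p,y,t)$ by symmetry; as $s\to t^-$ the factor $G(y,\cdot,t-s)$ approaches $\delta_y$, giving $A(t^-)=\bar G(p,y,t)$. Monotonicity then yields $\bar G(p,y,t)=A(t^-)\ge A(0^+)=G(p,y,t)$, which is the assertion since $y$ and $t>0$ were arbitrary (the case $t=0$ is the common initial condition (ii)). I expect the main obstacle to be precisely the two delicate points just isolated: first, the sign bookkeeping in the boundary term, where one must combine the Dirichlet condition $G|_{\partial M}=0$ with the interior positivity (i) to force $\partial_\nu G\le 0$, and correspondingly use $\bar G\ge 0$; and second, the rigorous justification of the one-sided limits $s\to 0^+,\,t^-$, where the singular $\delta$-type initial data must be handled (together with the symmetry/reproducing property of $G$) so that the endpoint evaluations are legitimate.
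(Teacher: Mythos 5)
Your argument is the classical Duhamel/monotonicity proof, and it matches the route the paper itself points to: the paper contains no proof of this proposition, deferring to \cite{CheeYau} and \cite{PLi}, where precisely this transplanted quantity $A(s)=\int_M G(y,x,t-s)\,\bar G(p,x,s)\,dx$ is differentiated in $s$, Green's identity converts the bulk term into a boundary term, and the two boundary alternatives are checked separately. Your sign bookkeeping is correct: with the convention $\square=\Delta-\partial_t$, hypothesis (iii) makes $\bar G$ a supersolution, hypothesis (i) is exactly what licenses multiplying $\partial_s\bar G\ge\Delta_x\bar G$ by $G$, and in the Dirichlet case $G\ge0$ in the interior together with $G=0$ on $\partial M$ does force $\partial_\nu G\le0$, so the boundary integrand $-\bar G\,\partial_\nu G$ is nonnegative; the endpoint identifications via (ii) then yield the inequality.

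The one point you should make explicit is that your proof does not run under hypotheses (i)--(iv) alone. You invoke the symmetry $G(x,y,t)=G(y,x,t)$, the heat equation for $x\mapsto G(y,x,\tau)$ with the first slot at an \emph{arbitrary} point $y$, and the concentration $G(y,\cdot,\tau)\to\delta_y$ as $\tau\to0^+$; none of these follow from (i)--(iv), which constrain $G$ only with first argument equal to the fixed point $p$. You flag this yourself by ``regarding $G$ as the genuine heat kernel,'' and for every use in the paper (where $G$ is the Dirichlet or Neumann kernel $H$ or $K$ in Propositions~\ref{thm3}--\ref{thm5}) these properties do hold, so your argument covers all the applications. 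Strictly speaking, however, it proves a special case of the proposition as stated. The route that works verbatim under (i)--(iv) is the parabolic minimum principle applied to $u=\bar G-G$: one has $\square_y u\le0$, $u(\cdot,0)=0$, and on $\partial M$ either $u\ge0$ (Dirichlet) or $\partial_\nu u=0$ (Neumann, where Hopf's lemma excludes a boundary minimum); this needs no symmetry of $G$, and its delicate point is instead the rigorous cancellation of the two $\delta_p$ singularities as $t\to0^+$, which your convolution formulation avoids.
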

The proof of Proposition \ref{prop1} can be found in \cite{CheeYau} and \cite{PLi}. Based on Proposition \ref{prop1}, Cheng-Li-Yau \cite{Cheng Li Yau 1984 Heat equations} proved the following three comparison results of the minimal submanifolds immersed in the unit sphere..
\begin{prop}[Theorem 3 in \cite{Cheng Li Yau 1984 Heat equations}]\label{thm3}
	Let $M^n$ be a $n$-dimensional minimally immersed submanifold of $\mathbb{S}^{n+\ell}$. Suppose that $D$ is a compact domain in $M$. For $p\in D$ if the outer radius at $p$ is not greater than $\frac{\pi}{2}$, then
	\begin{equation*}
		H(p,y,t)\le\bar{H}_a(r_p(y),t)
	\end{equation*}
	for all $y\in D$ and $t\in[0,\infty)$, where $\bar{H}_a(r_p(y),t)$ is the heat kernel with Dirichlet boundary condition on the ball of radius $a$ in $\mathbb{S}^{n}$ centered at the northpole.
\end{prop}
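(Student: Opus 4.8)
The plan is to deduce this comparison directly from the maximum principle encapsulated in Proposition \ref{prop1}, taking $G(p,y,t)=H(p,y,t)$ to be the Dirichlet heat kernel on $D$ and the comparison function to be the pulled-back model kernel $\bar{G}(p,y,t)=\bar{H}_a(r_p(y),t)$. Three of the four hypotheses of Proposition \ref{prop1} are essentially immediate: the Dirichlet kernel satisfies $H\ge0$ (hypothesis (i)) and $\square_yH=0$; on $\partial D$ we have the Dirichlet condition $H(p,z,t)=0$ together with $\bar{H}_a(r_p(z),t)\ge0$, because $r_p(z)\le a$ by the definition of the outer radius and $\bar{H}_a$ is itself a nonnegative heat kernel, so hypothesis (iv) holds in its first alternative. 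For the initial condition (ii) I would argue that as $t\to0^+$ the model kernel $\bar{H}_a(r,t)$ has the universal $n$-dimensional profile $(4\pi t)^{-n/2}e^{-r^2/4t}(1+o(1))$, so that $\bar{H}_a(r_p(\cdot),t)\,dV_M$ concentrates at $p$ with total mass tending to $1$; here one only needs that the extrinsic distance $r_p$ agrees with the intrinsic distance to leading order near $p$, the discrepancy contributing higher-order terms that vanish after the parabolic rescaling. Thus $\bar{G}(p,y,0)=\delta_p=G(p,y,0)$.

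The heart of the matter is hypothesis (iii), namely the supersolution inequality $\square_y\bar{G}\le0$, where $\square_y=\Delta_M-\partial_t$. Writing $b=|\nabla_M r_p|^2$ and using Takahashi's theorem (the position map of a minimal submanifold of $\mathbb{S}^{n+\ell}$ satisfies $\Delta_M y=-ny$), I would first compute $\Delta_M\cos r_p=\langle p,\Delta_M y\rangle=-n\cos r_p$, and then, expanding the left side by the chain rule, extract the identity
\[
  \Delta_M r_p=(n-b)\cot r_p .
\]
Since $r_p$ is the restriction of an ambient distance function, its intrinsic gradient is a projection of the ambient one and hence $b=|\nabla_M r_p|^2\le1$; moreover the hypothesis that the outer radius is at most $\tfrac{\pi}{2}$ guarantees $\cot r_p\ge0$ throughout $D$. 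Substituting $\bar{H}_a$ for the radial profile and using that $\bar{H}_a$ solves the heat equation on the geodesic ball $B_a\subset\mathbb{S}^n$, i.e. $\partial_t\bar{H}_a=\bar{H}_a''+(n-1)\cot r\,\bar{H}_a'$, the chain rule gives
\[
  \square_y\bar{G}=\Delta_M\bar{H}_a(r_p,t)-\partial_t\bar{H}_a
  =(b-1)\bigl[\bar{H}_a''-\cot r_p\,\bar{H}_a'\bigr].
\]
Because $b-1\le0$, the desired inequality $\square_y\bar{G}\le0$ reduces to the single monotonicity property of the model kernel
\[
  \bar{H}_a''-\cot r\,\bar{H}_a'\ge0,\qquad 0<r\le a\le\tfrac{\pi}{2}.
\]

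I expect this last inequality to be the main obstacle and the point where the earlier analysis of the radial model heat kernel in \cite{Cheng Li Yau 1984 Heat equations} must be invoked; it is easy to confirm it for small $t$ from the Gaussian asymptotics, and in the limiting hemisphere case the first Dirichlet eigenfunction $\cos r$ makes the quantity vanish identically, so the inequality is sharp. Granting the monotonicity of $\bar{H}_a$, all four hypotheses of Proposition \ref{prop1} are verified with $G=H$ and $\bar{G}=\bar{H}_a(r_p(\cdot),t)$, and the conclusion $H(p,y,t)\le\bar{H}_a(r_p(y),t)$ follows at once for all $y\in D$ and $t\in[0,\infty)$. The only genuinely delicate points are the sign of $\bar{H}_a''-\cot r\,\bar{H}_a'$, for which I would either cite the corresponding property established earlier in \cite{Cheng Li Yau 1984 Heat equations} or re-derive it from the radial heat equation and the Dirichlet boundary behavior, and making the distributional initial condition (ii) rigorous via the small-time asymptotics; neither affects the overall architecture, which is purely an application of the comparison principle together with Takahashi's identity and the normalization $r_p\le\tfrac{\pi}{2}$.
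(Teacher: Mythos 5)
The paper itself gives no proof of this proposition: it is imported verbatim from Cheng--Li--Yau, with only the remark that it follows from Proposition \ref{prop1}. Your reconstruction is essentially the original Cheng--Li--Yau argument: apply the comparison principle with $\bar G=\bar H_a(r_p(\cdot),t)$, use Takahashi's theorem and minimality to get $\Delta_M\cos r_p=-n\cos r_p$ and hence $\Delta_M r_p=(n-|\nabla_M r_p|^2)\cot r_p$, and reduce the supersolution property to $\bar H_a''-\cot r\,\bar H_a'\ge 0$ (equivalently, convexity of $\bar H_a$ as a function of $\cos r$), which is exactly the auxiliary lemma Cheng--Li--Yau prove for balls of radius $a\le\pi/2$ and is where that hypothesis is genuinely used. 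Your deferral of that lemma to the reference is the only unproved ingredient, and deferring it is consistent with how the original proof is organized, so the proposal is correct and follows the same route.
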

\begin{prop}[Theorem 4 in \cite{Cheng Li Yau 1984 Heat equations}]\label{thm4}
	Let $M^n$ be a $n$-dimensional minimally immersed submanifold of $\mathbb{S}^{n+\ell}$. Let $D_p(a)$ be any extrinsic ball of $M$ with radius $0<a<\pi$. If $\bar{K}_a(r_p(y),t)$ is the heat kernel with Neumann boundary condition on the ball of radius $a$ in $\mathbb{S}^n$, then
	\begin{equation*}
		K(p,y,t)\le\bar{K}_a(r_p(y),t)
	\end{equation*}
	for all $y\in D_p(a)$ and $t\in[0,\infty)$.
\end{prop}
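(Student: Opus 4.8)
The plan is to deduce the inequality from the parabolic comparison principle of Proposition \ref{prop1}, taking $G(p,y,t)=K(p,y,t)$ to be the Neumann heat kernel of the extrinsic ball $D_p(a)$ and $\bar{G}(p,y,t)=\bar{K}_a(r_p(y),t)$ to be the radial pull-back, via the extrinsic distance $r_p$, of the model Neumann kernel on the geodesic ball of radius $a$ in $\mathbb{S}^n$. It then suffices to verify the four hypotheses (i)--(iv) of Proposition \ref{prop1}, after which the conclusion $K(p,y,t)\le\bar{K}_a(r_p(y),t)$ is immediate. Here $0<a<\pi$ is used to keep $r_p$ in the range $(0,\pi)$, where it is smooth on $D_p(a)\setminus\{p\}$ (so that $M$ avoids the antipode of $p$).

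Three of the four conditions are essentially formal. Condition (i) holds because a Neumann heat kernel is nonnegative. For condition (ii), near $p$ the extrinsic distance $r_p$ agrees to first order with the intrinsic distance on $M$ (indeed $|\nabla_M r_p|\to1$ as $y\to p$), so by the coarea formula the model's point source at the pole pulls back to $\delta_p$ on $M$, and the two initial data coincide. For condition (iv) I would use the Neumann alternative: $\partial K/\partial\nu=0$ on $\partial D_p(a)$ by construction, while on $\{r_p=a\}$ the outward normal is $\nabla_M r_p/|\nabla_M r_p|$, so
\[
\frac{\partial}{\partial\nu}\bar{K}_a(r_p(y),t)=\bar{K}_a'(a,t)\,|\nabla_M r_p|=0,
\]
since the model kernel satisfies $\bar{K}_a'(a,t)=0$ (here $'$ denotes $\partial_r$).

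The heart of the matter is condition (iii). Since $K$ solves the heat equation, $\square_y K=0$, and it remains to show $\square_y\bar{G}\le0$, where $\square_y=\Delta_M-\partial_t$. Using $\Delta_M f(r_p)=f''(r_p)|\nabla_M r_p|^2+f'(r_p)\Delta_M r_p$ together with the model equation $\partial_t\bar{K}_a=\bar{K}_a''+(n-1)\cot r\,\bar{K}_a'$, the whole computation reduces to controlling $\Delta_M r_p$. This is where minimality enters: by Takahashi's theorem the position vector satisfies $\Delta_M X=-nX$, so $\cos r_p=\langle X,p\rangle$ obeys $\Delta_M\cos r_p=-n\cos r_p$, which unwinds to the exact identity
\[
\Delta_M r_p=\bigl(n-|\nabla_M r_p|^2\bigr)\cot r_p .
\]
Substituting this and $|\nabla_M r_p|^2\le|\nabla_{\mathbb{S}^{n+\ell}}r_p|^2=1$, a short cancellation gives
\[
\square_y\bar{G}=-\bigl(1-|\nabla_M r_p|^2\bigr)\bigl(\bar{K}_a''-\cot r_p\,\bar{K}_a'\bigr),
\]
so that, since $1-|\nabla_M r_p|^2\ge0$, condition (iii) becomes equivalent to the one-dimensional inequality $\bar{K}_a''-\cot r\,\bar{K}_a'\ge0$ on $(0,a)$ for every $t>0$.

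Establishing this last inequality is the main obstacle, and it is purely a statement about the model kernel: equivalently $\partial_r\bigl(\bar{K}_a'/\sin r\bigr)\ge0$, i.e. $\bar{K}_a'/\sin r$ is nondecreasing in $r$. I would prove it by a chain of parabolic maximum principles applied to radial derivatives of $\bar{K}_a$. First, $v:=\bar{K}_a'$ solves $\partial_t v=v''+(n-1)\cot r\,v'-(n-1)\csc^2 r\,v$ with $v=0$ at $r=0$ (symmetry) and at $r=a$ (Neumann); as the zeroth-order coefficient is nonpositive, the maximum principle yields $\bar{K}_a'\le0$. Next, $\psi:=\bar{K}_a'/\sin r$ satisfies $\partial_t\psi=\psi''+(n+1)\cot r\,\psi'-n\psi$, the radial heat operator in dimension $n+2$ with a nonpositive potential, with $\psi(a,t)=0$ and $\psi\le0$; differentiating once more, $\chi:=\partial_r\psi$ solves an equation of the same type and has boundary data $\chi(0,t)=0$, $\chi(a,t)\ge0$, whence $\chi\ge0$ and the desired monotonicity $\bar{K}_a''-\cot r\,\bar{K}_a'=\sin r\,\chi\ge0$ follows. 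The genuinely delicate points, which I would treat with care, are the behaviour of these quantities as $t\to0^+$ (where $\bar{K}_a$ degenerates to a point source) and the coordinate singularity of $\cot r$ and $\csc r$ at $r=0$; both are handled by working on $t\ge\varepsilon$ and $r\ge\delta$ and passing to the limit.
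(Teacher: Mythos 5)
The paper states this proposition without proof, simply quoting Theorem 4 of Cheng--Li--Yau, and your argument is essentially the one in that source: apply the comparison principle of Proposition \ref{prop1} with $\bar G=\bar K_a(r_p(y),t)$, reduce condition (iii) via Takahashi's identity $\Delta_M\cos r_p=-n\cos r_p$ (hence $\Delta_M r_p=(n-|\nabla_M r_p|^2)\cot r_p$) and $|\nabla_M r_p|\le1$ to the radial inequality $\bar K_a''-\cot r\,\bar K_a'\ge0$, and prove the latter by maximum principles applied to $\bar K_a'$ and $\bar K_a'/\sin r$, which is exactly Cheng--Li--Yau's Lemma 2. Your computations check out, and the delicate points you flag (the $t\to0^+$ limit and the coordinate singularity at $r=0$) are precisely the ones treated in the original.
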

\begin{prop}[Theorem 5 in \cite{Cheng Li Yau 1984 Heat equations}]\label{thm5}
	Let $M^n$ be a $n$-dimensional compact and minimally immersed submanifold into $\mathbb{S}^{n+\ell}$. If we denote the heat kernel on $M$ (without boundary condition) by $K(x,y,t)$ and the heat kernel on $\mathbb{S}^n$ by $\bar{K}(x,y,t)$, then
	\begin{equation*}
		K(p,y,t)\le\bar{K}(r_p(y),t)
	\end{equation*}
	for all $p,y\in M$ and $t\in[0,\infty)$.
\end{prop}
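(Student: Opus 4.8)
The plan is to deduce the closed-manifold comparison from the Neumann comparison of Proposition \ref{thm4} by letting the radius of the extrinsic ball tend to its maximal value. Since the ambient distance $r_p$ on $\mathbb{S}^{n+\ell}$ takes values in $[0,\pi]$ and equals $\pi$ only at the single antipodal point $-p$, the extrinsic balls $D_p(a)$ increase to $M\setminus\{-p\}$ as $a\uparrow\pi$; likewise the geodesic balls of radius $a$ in $\mathbb{S}^n$ increase to $\mathbb{S}^n$ minus its south pole. By Sard's theorem I would restrict to the full-measure set of radii $a$ for which $\partial D_p(a)=\{r_p=a\}$ is a regular hypersurface, so that Proposition \ref{thm4} applies and gives, writing $K_{D_p(a)}$ for the Neumann heat kernel on $D_p(a)$,
\begin{equation*}
	K_{D_p(a)}(p,y,t)\le\bar{K}_a(r_p(y),t),\qquad y\in D_p(a),\ t\ge0 .
\end{equation*}
The goal is then to send $a\uparrow\pi$ in this inequality.

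The key is to identify the two limits: on the model side the kernels $\bar{K}_a$ should converge to the heat kernel $\bar{K}$ of the closed sphere $\mathbb{S}^n$, and on $M$ the kernels $K_{D_p(a)}$ should converge to the heat kernel $K$ of the closed manifold $M$. Granting these, I fix $y\in M\setminus\{-p\}$ and $t>0$, note that $y\in D_p(a)$ once $a$ is close enough to $\pi$, pass to the limit to obtain $K(p,y,t)\le\bar{K}(r_p(y),t)$, and then invoke continuity in $y$ to absorb the single exceptional point. The case $n=1$ is trivial, since a one-dimensional minimal submanifold of $\mathbb{S}^{1+\ell}$ is a great circle, for which $K$ and $\bar{K}$ coincide.

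The main obstacle is exactly the convergence of the Neumann heat kernels under these exhaustions. The subtlety is that a Neumann condition on a receding boundary is not automatically negligible — already for $n=1$, cutting a circle at one point changes the spectrum — so the argument must use that for $n\ge2$ the removed sets, namely the point $-p\in M$ and the south pole of $\mathbb{S}^n$, have zero $H^1$-capacity. Then the Neumann energy forms $\int_{D_p(a)}|\nabla f|^2$ should Mosco-converge to $\int_M|\nabla f|^2$ on $H^1(M)$, and similarly on $\mathbb{S}^n$, whence the associated heat semigroups and their kernels converge to those of the closed manifolds. Establishing this spectral convergence of the Neumann Laplacians is where the real work lies.

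A more self-contained alternative would bypass the limit and apply Proposition \ref{prop1} directly with empty boundary, taking $G(y,t)=K(p,y,t)$ and $\bar{G}(y,t)=\bar{K}(r_p(y),t)$. Conditions (i), (ii), (iv) are then immediate (positivity of the closed heat kernel, matching of the initial singularity since $r_p$ agrees with the intrinsic distance to leading order, and vacuity of the boundary condition), and the content is condition (iii), that $\bar{G}$ is a supersolution. Using minimality in the form $\Delta_M x=-nx$ for the position vector, so that $\Delta_M\cos r_p=-n\cos r_p$, one gets the exact identity $\Delta_M r_p=(n-|\nabla_M r_p|^2)\cot r_p$ together with $|\nabla_M r_p|\le1$, and a direct computation gives
\begin{equation*}
	\square_y\bar{G}=(1-|\nabla_M r_p|^2)\bigl(\cot r_p\,\bar{K}_r-\bar{K}_{rr}\bigr),
\end{equation*}
where $\bar{K}_r,\bar{K}_{rr}$ are radial derivatives. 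Here $\square_y\bar{G}\le0$ reduces to the model monotonicity inequality $\bar{K}_{rr}\ge\cot r_p\,\bar{K}_r$ for the radial sphere heat kernel; along this route, verifying that inequality together with the precise initial-data matching would be the main obstacle.
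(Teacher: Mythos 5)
First, note that the paper you are working from does not prove this proposition at all: it is quoted as Theorem 5 of \cite{Cheng Li Yau 1984 Heat equations}, so the only benchmark is the original argument, which is essentially your second route --- apply the comparison principle of Proposition \ref{prop1} on the closed manifold (condition (iv) vacuous) with $G=K$ and $\bar G=\bar K(r_p(\cdot),t)$, using Takahashi's identity $\Delta_M\cos r_p=-n\cos r_p$ to get $\Delta_M r_p=(n-|\nabla_M r_p|^2)\cot r_p$ and hence the identity you display for $\square_y\bar G$. That reduction is correct. The difficulty is that in both of your routes the decisive analytic step is named but not carried out, so what you have is a plan rather than a proof. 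In route 2 everything hinges on two facts you defer: the radial inequality $\bar K_{rr}\ge\cot r\,\bar K_r$ for the spherical heat kernel (equivalently, convexity of $\bar K$ as a function of $u=\cos r$, since in that variable $\square_y\bar G=F_{uu}\bigl(|\nabla_M u|^2-(1-u^2)\bigr)$ with $|\nabla_M u|^2\le 1-u^2$), and the domination $\bar K(r_p(\cdot),0^+)\ge\delta_p$ as measures on $M$. Neither is ``immediate'': the first is a genuine lemma about the model kernel, typically proved by applying the parabolic maximum principle to the equation satisfied by the relevant radial derivative (this is exactly the role of the technical lemmas in \cite{Cheng Li Yau 1984 Heat equations}); the second requires comparing the volume density of $M$ at $p$ with that of $\mathbb{S}^n$ at the pole (and handling the fact that, for an immersion, $r_p^{-1}(0)$ may consist of several points --- harmless, but it must be said). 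Until these are supplied, the proof is not complete.

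Route 1 is weaker still. Beyond the small imprecision that $D_p(a)$ increases to $M\setminus\phi^{-1}(-p)$ (which may be all of $M$, or may omit more than one point), the convergence of the Neumann heat kernels $K_{D_p(a)}\to K$ and $\bar K_a\to\bar K$ as $a\uparrow\pi$ is the entire content of the argument, and Mosco convergence of the Neumann forms under an exhaustion whose complement shrinks to a capacity-zero set is not a step one can wave at: one must also upgrade semigroup (strong) convergence to pointwise convergence of kernels at the fixed points $(p,y)$, which requires uniform local estimates on the Neumann kernels near the receding boundary. Your own $n=1$ example shows the statement is capacity-sensitive, so this step cannot be treated as routine. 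My recommendation is to commit to route 2 and actually prove the two missing ingredients (the model convexity lemma and the initial-data domination); that is the self-contained version of the Cheng--Li--Yau argument, whereas route 1 replaces one hard lemma by a harder functional-analytic one.
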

Using Proposition \ref{thm3}, Proposition \ref{thm4} and Proposition \ref{thm5}, Cheng-Li-Yau \cite{Cheng Li Yau 1984 Heat equations} proved some interesting applications. The most interesting application comes from Proposition \ref{thm5}. First it is necessary to estimate the trace of the heat kernel on $\mathbb{S}^n$.
\begin{prop}[Lemma 3 in \cite{Cheng Li Yau 1984 Heat equations}]\label{lem3}
	Let $\bar{K}(x,y,t)$ be the heat kernel on $\mathbb{S}^n$, then there exists a constant $C_n$ depending only on $n$ with $$C_n=\frac{n^{n/2}e\Gamma(n/2,1)}{2}$$ such that
	\begin{equation*}
		\Tr\bar{K}(t)=\int_{\mathbb{S}^n}\bar{K}(x,x,t)~dx\le1+(n+1)e^{-nt}+C_nt^{-1}e^{-nt}
	\end{equation*}
	for all $t\ge1$.
\end{prop}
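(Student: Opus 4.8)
The plan is to read off the trace from the spectrum of the Laplacian on $\mathbb{S}^n$ and then to control the high-frequency contribution by a Gaussian integral. Recall that the eigenvalues of $-\Delta$ on $\mathbb{S}^n$ are $\lambda_k=k(k+n-1)$ for $k=0,1,2,\dots$, with multiplicities $m_k=\dim\mathcal{H}_k$ equal to the dimension of the degree-$k$ spherical harmonics, so that $m_0=1$, $m_1=n+1$, and in general $m_k=\frac{(2k+n-1)(k+n-2)!}{k!\,(n-1)!}$. Because $\mathbb{S}^n$ is a homogeneous space, for any $L^2$-orthonormal basis $\{\phi_{k,j}\}$ of $\mathcal{H}_k$ the quantity $\sum_j\phi_{k,j}(x)^2$ is independent of $x$; integrating the spectral expansion $\bar{K}(x,y,t)=\sum_k e^{-\lambda_k t}\sum_j\phi_{k,j}(x)\phi_{k,j}(y)$ over the diagonal therefore yields the clean identity
$$\Tr\bar{K}(t)=\int_{\mathbb{S}^n}\bar{K}(x,x,t)\,dx=\sum_{k=0}^{\infty}m_k\,e^{-k(k+n-1)t}.$$

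First I would peel off the two lowest modes, which reproduce the first two terms of the asserted bound exactly: $k=0$ contributes $1$ and $k=1$ contributes $(n+1)e^{-nt}$. It then remains to prove that the tail obeys $\sum_{k\ge2}m_k e^{-k(k+n-1)t}\le C_n t^{-1}e^{-nt}$ for $t\ge1$. To extract the factor $e^{-nt}$ I would use $k(k+n-1)=n+(k-1)(k+n)\ge k^2+n$, valid for $k\ge2$ and $n\ge2$, so that $e^{-k(k+n-1)t}\le e^{-nt}e^{-k^2t}$ and the whole problem collapses to bounding $\sum_{k\ge2}m_k e^{-k^2t}$.

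Next I would replace the multiplicities by a polynomial majorant, writing $m_k=\frac{2k+n-1}{k+n-1}\binom{k+n-1}{n-1}\le 2\binom{k+n-1}{n-1}\le\frac{2(k+n-1)^{n-1}}{(n-1)!}$ and then dominating $(k+n-1)^{n-1}$ by a constant times $k^{n-1}$ for $k\ge2$, which gives $m_k\le c_n k^{n-1}$. The model integral is then
$$\int_1^{\infty}x^{n-1}e^{-x^2t}\,dx=\tfrac12\,t^{-n/2}\,\Gamma\!\Big(\tfrac n2,t\Big),$$
obtained by the substitution $s=x^2t$, and since $\Gamma(\tfrac n2,\cdot)$ is decreasing while $t^{-n/2}\le t^{-1}$ for $t\ge1$ and $n\ge2$, its value is at most $\tfrac12 t^{-1}\Gamma(\tfrac n2,1)$. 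This single estimate simultaneously produces the $t^{-1}$, the incomplete Gamma factor $\Gamma(n/2,1)$, and (after reinstating $e^{-nt}$) the correct decay appearing in $C_n$.

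The main obstacle is twofold. The delicate analytic point is that $x\mapsto x^{n-1}e^{-x^2t}$ is \emph{not} monotone on $[1,\infty)$: it is unimodal with peak at $x_\ast=\sqrt{(n-1)/(2t)}\sim\sqrt{n}$, so the naive comparison $\sum_{k\ge2}f(k)\le\int_1^\infty f$ fails and the tail estimate is really a Weyl-type heat-trace bound. I would handle this by splitting off the $O(\sqrt n)$ pre-peak terms, bounding them by $\max_x x^{n-1}e^{-x^2t}$, and comparing only the monotone post-peak part to the integral; it is precisely the supremum $\max_x x^{n}e^{-x^2 t}$ that generates the factor $n^{n/2}$. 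The second, more tedious, obstacle is the bookkeeping of constants: the accumulated prefactor $c_n$ together with the peak contribution and the $\tfrac12\Gamma(n/2,1)$ from the integral must all be dominated by the explicit closed form $C_n=\frac{n^{n/2}e\,\Gamma(n/2,1)}{2}$. Since $C_n$ is a deliberately generous constant, the difficulty is not tightness but arranging, via Stirling's estimate for $(n-1)!$ and a uniform choice of majorant, that every inequality holds simultaneously for all $t\ge1$ and all $k\ge2$ rather than merely asymptotically.
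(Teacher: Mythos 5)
The paper offers no proof of this statement to compare against: it is quoted verbatim as Lemma~3 of Cheng--Li--Yau and used as a black box. Judged on its own, your outline is the standard spectral argument and every step you actually commit to checks out: the eigenvalue and multiplicity data for $\mathbb{S}^n$, the trace identity $\Tr\bar K(t)=\sum_k m_k e^{-k(k+n-1)t}$, the exact matching of the $k=0,1$ terms, the inequality $k(k+n-1)\ge k^2+n$ for $k\ge2$ (equivalent to $(n-1)k\ge n$), the bound $m_k\le 2\binom{k+n-1}{n-1}\le \frac{2(k+n-1)^{n-1}}{(n-1)!}$, and the evaluation $\int_1^\infty x^{n-1}e^{-x^2t}\,dx=\tfrac12 t^{-n/2}\Gamma(\tfrac n2,t)\le\tfrac12 t^{-1}\Gamma(\tfrac n2,1)$ for $t\ge1$, $n\ge2$. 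You also correctly identify the one real subtlety --- the summand $x^{n-1}e^{-x^2t}$ is unimodal with peak at $\sqrt{(n-1)/(2t)}$, which exceeds $1$ for $n\ge4$ near $t=1$, so the naive sum-to-integral comparison fails --- and your remedy (bound the at most $\sqrt{(n-1)/2}$ pre-peak terms by the supremum $((n-1)/2t)^{(n-1)/2}e^{-(n-1)/2}$ and compare only the monotone part to the integral) is the right one.

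The only gap is that the final constant verification is asserted (``$C_n$ is deliberately generous'') rather than performed, and since $C_2=1$ this cannot be waved away for small $n$. It does go through: using $(k+n-1)\le\tfrac{n+1}{2}k$ for $k\ge2$ gives $m_k\le \frac{2}{(n-1)!}\bigl(\tfrac{n+1}{2}\bigr)^{n-1}k^{n-1}$, and the resulting tail bound is $\frac{2}{(n-1)!}\bigl(\tfrac{n+1}{2}\bigr)^{n-1}\bigl[\bigl(\tfrac{n-1}{2}\bigr)^{n/2}e^{-(n-1)/2}+\tfrac12\Gamma(\tfrac n2,1)\bigr]t^{-1}e^{-nt}$, with the first bracketed term absent for $n\le 3$ where no pre-peak terms occur. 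For $n=2$ this reads $\tfrac32\Gamma(1,t)\,t^{-1}e^{-2t}\le\tfrac{3}{2e}t^{-1}e^{-2t}<C_2t^{-1}e^{-2t}$, the tightest case; for $n=3$ one gets roughly $1.02\,t^{-1}e^{-3t}$ against $C_3\approx 3.58$; and for $n\ge4$ the prefactor $\frac{2}{(n-1)!}\bigl(\tfrac{n+1}{2}\bigr)^{n-1}=O\bigl((e/2)^n/\sqrt n\bigr)$ is eventually negligible compared with the factor $n^{n/2}$ in $C_n$, so the margin only widens. In short, the plan is sound and would yield the stated constant; it is missing only these few lines of explicit bookkeeping, which a referee would insist on before accepting the claim for all $n\ge2$ and $t\ge1$.
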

\begin{rem}
	Using a result of Takahashi \cite{Takahashi}, an immersion of $M^n$ into $\mathbb{S}^{n+\ell}$ is minimal if and only if $\tri\phi=-n\phi$ for any coordinate function $\phi$ in $\mathbb{R}^{n+\ell+1}$. Moreover, if $M^n$ is compact, then $n=\lambda_k$ is the $k$-th eigenvalue of $M$. In addition, if the immersion is of maximal dimension, i.e., the image of $M^n$ is not contained in any hyperplane of $\mathbb{R}^{n+\ell+1}$, then the multiplicity of $n=\lambda_k$ is at least $n+\ell+1$.
\end{rem}
By Proposition \ref{thm5} and Proposition \ref{lem3}, Cheng-Li-Yau \cite{Cheng Li Yau 1984 Heat equations} proved the following useful inequality.
\begin{prop}[Theorem 6 in \cite{Cheng Li Yau 1984 Heat equations}]\label{thm6}
	Let $M^n\rightarrow\mathbb{S}^{n+\ell}$ be a minimal immersion of the compact manifold $M$ of maximal dimension. Suppose that the spectrum of $M$ is ordered by magnitude. If $n=\lambda_k$, then
	\begin{equation*}
		k\le\frac{\vol(M)}{\vol(\mathbb{S}^n)}(e^t+n+1+nC_nt^{-1})-e^t,
	\end{equation*}
	for any $t\ge0$.
\end{prop}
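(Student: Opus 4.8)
The plan is to sandwich the trace of the heat kernel on $M$ between a crude spectral lower bound and the comparison upper bound supplied by Proposition \ref{thm5} and Proposition \ref{lem3}. First I would recall the spectral expansion on the closed manifold $M$: writing $0=\lambda_0<\lambda_1\le\lambda_2\le\cdots$ for the spectrum and $\{\phi_i\}$ for an $L^2$-orthonormal basis of eigenfunctions, one has $K(x,y,t)=\sum_{i\ge0}e^{-\lambda_i t}\phi_i(x)\phi_i(y)$, whence
\[
\Tr K(t)=\int_M K(x,x,t)\,dx=\sum_{i\ge0}e^{-\lambda_i t}.
\]
Since $n=\lambda_k$ and the eigenvalues are nondecreasing, the $k$ eigenvalues $\lambda_1,\dots,\lambda_k$ all lie in $(0,n]$, so $\lambda_i t/n\le t$ and hence $e^{-\lambda_i t/n}\ge e^{-t}$ for every $t\ge0$. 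Separating off the zero eigenvalue, this gives the lower bound
\[
\Tr K(t/n)\ge 1+\sum_{i=1}^k e^{-\lambda_i t/n}\ge 1+k\,e^{-t}.
\]

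Next I would convert the pointwise comparison of Proposition \ref{thm5} into a statement about traces. Taking $y=p$ yields $K(p,p,t)\le\bar K(0,t)$, where $\bar K(0,t)$ is the on-diagonal value of the heat kernel on $\mathbb{S}^n$; by the homogeneity of $\mathbb{S}^n$ this diagonal value is independent of the base point, so $\Tr\bar K(t)=\vol(\mathbb{S}^n)\,\bar K(0,t)$. Integrating the comparison over $M$ then gives
\[
\Tr K(t)\le \vol(M)\,\bar K(0,t)=\frac{\vol(M)}{\vol(\mathbb{S}^n)}\,\Tr\bar K(t).
\]

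I would then feed in Proposition \ref{lem3} evaluated at time $t/n$ (legitimate once $t/n\ge1$), using $C_n(t/n)^{-1}=nC_n t^{-1}$ and $e^{-n(t/n)}=e^{-t}$, to obtain
\[
\Tr K(t/n)\le\frac{\vol(M)}{\vol(\mathbb{S}^n)}\bigl(1+(n+1)e^{-t}+nC_n t^{-1}e^{-t}\bigr).
\]
Combining this with the lower bound $1+k e^{-t}\le\Tr K(t/n)$, multiplying through by $e^t$, and isolating $k$ produces exactly
\[
k\le\frac{\vol(M)}{\vol(\mathbb{S}^n)}\bigl(e^t+n+1+nC_n t^{-1}\bigr)-e^t .
\]

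The main obstacle is the passage from the pointwise kernel comparison to the trace inequality: this is precisely where the homogeneity of $\mathbb{S}^n$ enters, guaranteeing that $\bar K(x,x,t)$ is constant so that the integrated comparison collapses to the ratio $\vol(M)/\vol(\mathbb{S}^n)$, and one must take care that the comparison kernel is the genuine heat kernel on the closed $\mathbb{S}^n$ so that Proposition \ref{lem3} is applicable. A secondary bookkeeping point is the time rescaling $t\mapsto t/n$, which is what turns the $e^{-nt}$ and $C_nt^{-1}$ of Proposition \ref{lem3} into the $e^{-t}$ and $nC_n t^{-1}$ of the stated form, and which, together with the indexing $\lambda_0=0$, accounts for the leading $1$ in the trace and hence for the subtracted term $e^t$ in the final inequality.
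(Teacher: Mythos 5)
Your argument is correct and is essentially the same as the one the paper itself reproduces (inside the proof of Theorem \ref{2ndd}): integrate the pointwise comparison of Proposition \ref{thm5} over $M$, use homogeneity of $\mathbb{S}^n$ and Proposition \ref{lem3} to bound the trace, rescale time by $\lambda_k=n$, and lower-bound the first $k+1$ spectral terms by $1+ke^{-t}$. Your parenthetical restriction to $t/n\ge 1$ is in fact more careful than the stated range ``$t\ge 0$'' (Proposition \ref{lem3} only holds for $t\ge1$), but this is harmless since the application takes $t=\alpha nC_n\ge n$.
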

Finally, 
 Cheng-Li-Yau \cite{Cheng Li Yau 1984 Heat equations} completed the proof of Theorem \ref{Cheng Li Yau 1984 Heat equations} by Proposition \ref{thm6},.

\section{Proof of Theorem \ref{1st}}
First we consider a function $f_1:[0,s]\to\mathbb{R}$ with
\begin{equation*}
	f_1(\alpha)=\frac{\alpha\ell-1}{\alpha n+\alpha+1+\alpha\exp(\alpha nC_n)},
\end{equation*}
where $s>2$ is any positive real number. Since $f_1$ is continuous on $[0,s]$, then $f_1$ can attain its maximal value and minimal value on $[0,s]$. Let $\beta$ be the point where $f_1$ attains its maximal value.
\begin{lem}\label{leml}
	Under the foregoing notations and assumptions, the larger $\ell$ is, the smaller $\beta$ is.
\end{lem}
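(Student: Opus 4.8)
The plan is to treat $\ell$ as a continuous positive parameter and to locate the maximizer $\beta=\beta(\ell)$ through its stationarity equation, showing that this equation pins down $\beta$ as a strictly decreasing function of $\ell$. Write $g(\alpha)=\alpha n+\alpha+1+\alpha\exp(\alpha nC_n)$ for the denominator, which does \emph{not} involve $\ell$, so that $f_1(\alpha)=(\alpha\ell-1)/g(\alpha)$. First I would record the elementary derivatives. Setting $c=nC_n$, one has $g'(\alpha)=n+1+(1+c\alpha)e^{c\alpha}$ and $g''(\alpha)=c(2+c\alpha)e^{c\alpha}>0$; writing $\Phi(\alpha)=c\alpha^2 e^{c\alpha}$ for the quantity $\alpha^2 nC_n\exp(\alpha nC_n)$, a direct computation yields the two structural identities
\begin{equation*}
	g(\alpha)-\alpha g'(\alpha)=1-\Phi(\alpha),\qquad \Phi'(\alpha)=\alpha\,g''(\alpha),
\end{equation*}
which are the facts that make the rest of the argument collapse.

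Next I would analyze the interior critical points. The stationarity condition $f_1'(\alpha)=0$ reads $\ell g(\alpha)=(\alpha\ell-1)g'(\alpha)$, which upon inserting $g-\alpha g'=1-\Phi$ rearranges to $\ell\,(\Phi(\alpha)-1)=g'(\alpha)$. Since $g'>0$ and $\ell>0$, every stationary point automatically lies in the region $\{\Phi>1\}$ and solves $\ell=h(\alpha)$, where
\begin{equation*}
	h(\alpha):=\frac{g'(\alpha)}{\Phi(\alpha)-1}.
\end{equation*}
The heart of the proof is to show that $h$ is strictly decreasing on $\{\Phi>1\}$. Differentiating and substituting $\Phi'=\alpha g''$, the numerator of $h'$ equals $g''(\alpha)\big[(\Phi(\alpha)-1)-\alpha g'(\alpha)\big]$; but from $g-\alpha g'=1-\Phi$ one gets $(\Phi-1)-\alpha g'=-g$, so this numerator telescopes to $-g(\alpha)g''(\alpha)<0$. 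Hence $h'(\alpha)<0$ throughout.

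Since $h$ is continuous and strictly decreasing, it is injective, so for each $\ell$ there is at most one interior stationary point $\beta_{\mathrm{unc}}(\ell)=h^{-1}(\ell)$, and $\beta_{\mathrm{unc}}$ is strictly decreasing in $\ell$. Because $f_1(0)=-1<0$ while $f_1(\alpha)>0$ for $\alpha>1/\ell$ and $f_1(\alpha)\to 0^{+}$ as $\alpha\to\infty$, the function has a single hump, so this unique stationary point is the global maximizer on $[0,\infty)$. On the truncated interval $[0,s]$ the maximizer is therefore $\beta(\ell)=\min\{\beta_{\mathrm{unc}}(\ell),\,s\}$: the maximum sits at the right endpoint precisely when $\beta_{\mathrm{unc}}(\ell)>s$, in which case $f_1$ is increasing throughout $[0,s]$. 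In every case $\beta(\ell)$ is nonincreasing in $\ell$, and strictly decreasing as soon as it is interior, which is exactly the assertion.

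The \textbf{main obstacle} is the derivative computation for $h$: written out, $h'$ is a ratio of products of exponentials that looks intractable, and it is only the two identities $g-\alpha g'=1-\Phi$ and $\Phi'=\alpha g''$ that collapse its numerator to the manifestly negative $-g\,g''$. A secondary bookkeeping point is the endpoint analysis—one must verify that the single-hump shape of $f_1$ forces the interior stationary point to be the maximizer, and treat the regime where $\beta_{\mathrm{unc}}$ exits $[0,s]$—but as noted this only makes $\beta$ stick at $s$, which preserves the claimed monotonicity.
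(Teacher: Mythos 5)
Your proposal is correct and follows essentially the same route as the paper: both derive the stationarity equation $\ell=g'(\beta)/(\Phi(\beta)-1)$ (the paper writes this quotient out explicitly as $\ell=\frac{n+1+(1+\beta nC_n)e^{\beta nC_n}}{\beta^2 nC_n e^{\beta nC_n}-1}$) and then show that the right-hand side is strictly decreasing in $\beta$, so that $\beta$ decreases as $\ell$ increases. Your identities $g-\alpha g'=1-\Phi$ and $\Phi'=\alpha g''$ collapse the derivative to $-gg''/(\Phi-1)^2$, which is a cleaner form of the same computation the paper does by brute force, and your added remarks on uniqueness of the critical point and the endpoint case on $[0,s]$ supply details the paper leaves implicit.
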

\begin{proof}
	First, we have
	\begin{equation*}
		f_1'(\alpha)=\frac{-\ell(\alpha^2nC_n\exp(\alpha nC_n)-1)+(n+1+(1+\alpha nC_n)\exp(\alpha nC_n))}{(\alpha n+\alpha+1+\alpha\exp(\alpha nC_n))^2}.
	\end{equation*}
	Since $\beta$ is the point where $f_1$ attains its maximal value, then $f_1'(\beta)=0$. Hence
	\begin{equation*}
		\ell=\frac{n+1+(1+\beta nC_n)\exp(\beta nC_n)}{\beta^2nC_n\exp(\beta nC_n)-1}.
	\end{equation*}
	Let
	\begin{equation*}
		g(\beta)=\frac{n+1+(1+\beta nC_n)\exp(\beta nC_n)}{\beta^2nC_n\exp(\beta nC_n)-1}.
	\end{equation*}
	Then
	\begin{equation*}
		g'(\beta)=\text{I}+\text{II}
	\end{equation*}
	where
	\begin{equation*}
		\begin{aligned}
			\text{I}&=\frac{nC_ne^{\beta nC_n}(2+\beta nC_n)(\beta^2nC_ne^{\beta nC_n}-1)}{(\beta^2nC_ne^{\beta nC_n}-1)^2},\text{ and}\\
			\text{II}&=\frac{-nC_ne^{\beta nC_n}(2\beta+\beta^2nC_n)[n+1+(1+\beta nC_n)e^{\beta nC_n}]}{(\beta^2nC_ne^{\beta nC_n}-1)^2}.
		\end{aligned}
	\end{equation*}
	Then
	\begin{equation*}
		\begin{aligned}
			g'(\beta)&=\frac{-nC_ne^{2\beta nC_n}(2\beta+\beta^2nC_n)-nC_ne^{\beta nC_n}[\beta nC_n(1+\beta(n+1))+2(\beta n+\beta+1)]}{(\beta^2nC_ne^{\beta nC_n}-1)^2}\\
			&<0.
		\end{aligned}
	\end{equation*}
	Therefore, the larger $\ell$ is, the smaller $\beta$ is.
\end{proof}
By Lemma \ref{leml}, it suffice to consider the smallest $\ell$, i.e., $\ell=1$. Hence, it suffices to consider
\begin{equation*}
	f_2(\beta)=\frac{\beta-1}{\beta n+\beta+1+\beta\exp(\beta nC_n)}.
\end{equation*}
Let $\gamma_n$ be the point where $f_2$ attains its maximal value for $n\ge2$.
\begin{lem}\label{lemn}
	Under the foregoing notations and assumptions, we have $\gamma_2>1.3$ and $1<\gamma_n\le1.3$ for $n\ge3$.
\end{lem}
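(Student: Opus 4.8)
The plan is to reduce the two-sided bound on $\gamma_n$ to a single sign test of an explicit function at $\beta=1.3$, using monotonicity. First I would compute $f_2'$. Writing $D(\beta)=\beta n+\beta+1+\beta\exp(\beta nC_n)$ for the denominator and differentiating, the numerator of $f_2'(\beta)$ collapses to $(n+2)-\Phi_n(\beta)$, where
\[
\Phi_n(\beta):=e^{\beta nC_n}\bigl(\beta^2 nC_n-\beta nC_n-1\bigr);
\]
this is the same balance that appears in Lemma \ref{leml} upon setting $\ell=1$. Hence $f_2'(\beta)$ has the same sign as $(n+2)-\Phi_n(\beta)$, and $\gamma_n$ is characterized as a solution of $\Phi_n(\gamma_n)=n+2$.

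Next I would record the shape of $f_2$: since $f_2(1)=0$, $f_2<0$ on $(0,1)$ and $f_2>0$ on $(1,\infty)$ with $f_2(\beta)\to0^+$ as $\beta\to\infty$, the maximiser lies in $(1,\infty)$, so $\gamma_n>1$ at once, and the subsequent bounds show $\gamma_n<2<s$, so the maximum is interior to $[0,s]$. The key structural step is that $\Phi_n$ is strictly increasing on $(1,\infty)$: a short computation gives
\[
\Phi_n'(\beta)=nC_n\,e^{\beta nC_n}\,(\beta-1)\,(nC_n\beta+2),
\]
which is positive for $\beta>1$. As $\Phi_n(1)=-e^{nC_n}<0<n+2$ and $\Phi_n\to+\infty$, the root $\gamma_n$ is unique, and monotonicity converts the comparisons into $\gamma_n>1.3\iff\Phi_n(1.3)<n+2$ and $\gamma_n\le1.3\iff\Phi_n(1.3)\ge n+2$, where $\Phi_n(1.3)=e^{1.3nC_n}(0.39\,nC_n-1)$.

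For $n=2$ the computation is immediate: $\Gamma(1,1)=e^{-1}$ gives $C_2=1$, so the bracket $0.39\cdot2-1<0$ forces $\Phi_2(1.3)<0<4$, whence $\gamma_2>1.3$. For $n\ge3$ I would instead prove $\Phi_n(1.3)\ge n+2$ via a uniform lower bound on $C_n$. Monotonicity of $C_n$ in $n$ is easy, since $n^{n/2}$ is increasing and $\Gamma(s,1)=\int_1^\infty e^{-t}t^{s-1}\,dt$ is increasing in $s$ (because $\ln t>0$ on $(1,\infty)$); combined with $\Gamma(3/2,1)=\tfrac12\Gamma(1/2,1)+e^{-1}>e^{-1}$ this yields $C_n\ge C_3>\tfrac12\cdot3^{3/2}>2.5$ for all $n\ge3$. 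Then $nC_n>2.5\,n$, the bracket $0.39\,nC_n-1$ is positive, and $\Phi_n(1.3)>e^{3.25n}(0.97\,n-1)$, which exceeds $n+2$ for every $n\ge3$; this last step is an elementary monotone inequality.

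The hard part will be the uniform control of $C_n$ at small $n$: a naive estimate such as $\Gamma(n/2,1)\ge e^{-2}$ barely fails at $n=3$, producing $\Phi_3(1.3)\approx4.96<5$, so one cannot be cavalier about constants. Securing monotonicity of $C_n$ together with the sharper value $C_3>2.5$ above — or, alternatively, checking $n=3,4$ by direct estimation of $C_n$ and invoking the super-exponential growth of $n^{n/2}$ for $n\ge5$ — is the delicate step. Once such a bound is in hand, every remaining inequality is routine, and the whole lemma reduces to the single monotone function $\Phi_n$ crossing the level $n+2$ below $1.3$ for $n\ge3$ and above $1.3$ for $n=2$.
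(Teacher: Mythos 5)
Your proposal is correct, and while it starts from the same critical-point identity as the paper — your $(n+2)-\Phi_n(\beta)$ is exactly the numerator of $f_2'$, i.e.\ equation \eqref{lemeq} — the way you settle the comparison at $\beta=1.3$ is genuinely different and cleaner. The paper argues by contradiction in three separate cases ($n=2$, $n=3$, $n\ge4$), each with its own numerical input: for $n=3$ it must locate the root $\tilde{\gamma}_3<1.1$ of the polynomial factor to get monotonicity of $\varphi_3$ on $(1.1,\infty)$ and uses $C_3>\tfrac{10}{3}$, while for $n\ge4$ it uses $C_4=16$ and monotonicity of $C_n$. Your factorization $\Phi_n'(\beta)=nC_n e^{\beta nC_n}(\beta-1)(nC_n\beta+2)$ (which checks out) gives global strict monotonicity of $\Phi_n$ on $(1,\infty)$ in one line, so the whole lemma reduces to the sign of $\Phi_n(1.3)-(n+2)$, and the uniform bound $C_n\ge C_3>\tfrac{3\sqrt{3}}{2}>2.5$ (via $\Gamma(3/2,1)=\tfrac12\Gamma(1/2,1)+e^{-1}>e^{-1}$ and monotonicity of $n^{n/2}$ and $\Gamma(n/2,1)$) disposes of all $n\ge3$ at once. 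Your route buys two further things the paper leaves implicit: uniqueness of the maximizer $\gamma_n$, and the verification that the maximum of $f_2$ on $[0,s]$ is attained at an interior critical point (the paper applies $f_2'(\gamma_n)=0$ without ruling out the endpoint $\beta=s$; your observation that $\Phi_n(2)>n+2$, hence $\gamma_n<2<s$, closes this). The only cost is the extra derivative computation for $\Phi_n$, which is elementary. All your numerical estimates ($C_2=1$, so $0.39\cdot 2-1<0$ for $n=2$; $e^{3.25n}(0.97n-1)\gg n+2$ for $n\ge3$) are correct.
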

\begin{proof}
	First, we have
	\begin{equation*}
		f_2'(\beta)=\frac{(\beta n+\beta+1+\beta\exp(\beta nC_n))-(\beta-1)(n+1+\exp(\beta nC_n)+\beta nC_n\exp(\beta nC_n))}{(\beta n+\beta+1+\beta\exp(\beta nC_n))^2}.
	\end{equation*}
	Then
	\begin{equation}\label{lemeq}
		n+2=(\gamma_n^2nC_n-\gamma_nnC_n-1)\exp(\gamma_nnC_n).
	\end{equation}
	From this equation, if $\gamma\le1$, then the left-hand-side of \eqref{lemeq} is positive while the right-hand-side of \eqref{lemeq} is negative, which gives the contradiction. Hence we obtain that $\gamma>1$ for every $n\ge2$. From now on, we consider the equation
	\begin{equation}\label{lemeqq}
		\frac{n+2}{\exp(\gamma_nnC_n)}=\gamma_n^2nC_n-\gamma_nnC_n-1,
	\end{equation}
	The left-hand-side and the right-hand-side of \eqref{lemeqq} with respect to $n$ will be denoted by $\text{LHS}|_n$ and $\text{RHS}|_n$, respectively. Next, we will   discuss the following three cases.
	\begin{itemize}
		\item[(i)]For $n=2$, the equation \eqref{lemeqq} becomes
			\begin{equation*}
				\frac{4}{\exp(2\gamma_2)}=2\gamma_2^2-2\gamma_2-1.
			\end{equation*}
			Now suppose that $1<\gamma_2\le1.3$. Then $\text{LHS}|_2\ge\frac{4}{e^{2.6}}>0.29$, while $\text{RHS}|_2<0$ when $1<\gamma_2\le1.3$, which gives the contradiction.
		\item[(ii)] For $n=3$, the equation \eqref{lemeqq} becomes
			\begin{equation*}
				\frac{5}{\exp(3C_3\gamma_3)}=3C_3\gamma_3^2-3C_3\gamma_3-1.
			\end{equation*}
			It is easy to calculate that $C_3\approx 3.58$. Actually we only need $C_3>\frac{10}{3}$. Suppose that $\gamma_3>1.3$, then $\text{LHS}|_3<\frac{5}{e^{13}}\ll0.1$. To estimate $\text{RHS}|_3$, let $$\varphi_3(\gamma_3)=3C_3\gamma_3^2-3C_3\gamma_3-1.$$ Let $\varphi_3(\tilde{\gamma}_3)=0$. Combining $\tilde{\gamma}_3>1$, we obtain
			\begin{equation*}
				\tilde{\gamma}_3=\frac{3C_3+\sqrt{(3C_3)^2+4(3C_3)}}{2(3C_3)}=\frac{1+\sqrt{1+\frac{4}{3C_3}}}{2}<1.1,
			\end{equation*}
			which implies that $\varphi_3(\gamma_3)$ is monotone increasing in $\gamma_3\in(1.1,+\infty)$. Hence, if $\gamma_3>1.3$, then we have
			\begin{equation*}
				\text{RHS}|_3=\varphi_3(\gamma_3)>\varphi_3(1.3)=1.17C_3-1>2,
			\end{equation*}
			which gives the contradiction. Therefore, $1<\gamma_3\le1.3$.
		\item[(iii)] For $n\ge4$,  suppose that $\gamma_n>1.3$ and we observe that
			\begin{equation*}
				\begin{aligned}
					\text{RHS}|_{n\ge4}&=\gamma_n^2nC_n-\gamma_nnC_n-1\\
					&=nC_n\gamma_n(\gamma_n-1)-1\\
                   &\ge4C_4\gamma_4(\gamma_4-1)-1\\
					&\ge4\times16\times1.3\times0.3-1\\
					&>20
				\end{aligned}
			\end{equation*}
			by $C_4=16$ and the monotonicity of $C_n$ for $n\ge4$. On the other hand, we have
			\begin{equation*}
				\text{LHS}|_{n\ge4}=\frac{n+2}{\exp(\gamma_nnC_n)}<\frac{n+2}{\exp(20n)}\eqqcolon\psi(n).
			\end{equation*}
			Since $\psi(n)$ is decreasing for $n\ge4$, then $\text{LHS}|_{n\ge4}<\psi(4)\ll1$, which gives the contradiction.
	\end{itemize}
	This completes the proof by (i)-(iii).
\end{proof}
Now we can give the proof of Theorem \ref{1st}.
\begin{proof}[Proof of Theorem \ref{1st}]
	Choosing $t=\alpha nC_n$ in Proposition \ref{thm6}, we obtain
	\begin{equation*}
		k\le\frac{\vol(M)}{\vol(\mathbb{S}^n)}\left(e^{\alpha nC_n}+n+1+nC_n\frac{1}{\alpha nC_n}\right)-e^{\alpha nC_n}.
	\end{equation*}
	Since the immersion is maximal dimension, the multiplicity of $n$ is at least $n+\ell+1$, hence $k\ge n+\ell+1$. Then
	\begin{equation*}
		n+\ell+1\le\frac{\vol(M)}{\vol(\mathbb{S}^n)}\left(e^{\alpha nC_n}+n+1+\frac{1}{\alpha}\right)-e^{\alpha nC_n},
	\end{equation*}
	which implies that
	\begin{equation*}
		\vol(M)\ge\left(1+\frac{\alpha\ell-1}{\alpha n+\alpha+1+\alpha\exp(\alpha nC_n)}\right)\vol(\mathbb{S}^n).
	\end{equation*}
	By Lemma \ref{leml} and Lemma \ref{lemn}, it suffice to consider the case $(n,\ell)=(2,1)$, i.e.,
	\begin{equation*}
		f(\alpha)=\frac{\alpha-1}{3\alpha+1+\alpha\exp(2\alpha)}.
	\end{equation*}
	It remains to find the maximum point of the above function. To do this, we first have
	\begin{equation*}
		f'(\alpha)=\frac{4+(1+2\alpha-2\alpha^2)\exp(2\alpha)}{(3\alpha+1+\alpha\exp(2\alpha))^2}.
	\end{equation*}
	Let
	\begin{equation*}
		h(\alpha)=4+(1+2\alpha-2\alpha^2)\exp(2\alpha).
	\end{equation*}
	It is easy to calculate that $h(1.42)>0$ and $h(1.44)<0$. Then the zero point lies between $(1.42,1.44)$. For simplicity, we can choose $\alpha=1.43$ to obtain a better gap than \cite{Cheng Li Yau 1984 Heat equations}. In conclusion, there exists a positive constant
	\begin{equation*}
		B_{n,\alpha}<\alpha n+\alpha+1+\alpha\exp(\alpha nC_n)
	\end{equation*}
	with $\alpha=1.43$ such that
	\begin{equation*}
		\vol(M)>\left(1+\frac{\alpha\ell-1}{B_{n,\alpha}}\right)\vol(\mathbb{S}^n),
	\end{equation*}
	which proves the first inequality. To prove the second inequality, we have the following calculations:
	\begin{equation*}
		\begin{aligned}
			\left(\frac{\alpha\ell-1}{B_{n,\alpha}}\right)\bigg/\left(\frac{2\ell-1}{B_n}\right)&=\frac{1.43\ell-1}{2\ell-1}\frac{2n+3+2\exp(2nC_n)}{1.43n+2.43+1.43\exp(1.43nC_n)}.
		\end{aligned}
	\end{equation*}
	Note that $$\frac{1.43\ell-1}{2\ell-1}\text{\quad and \quad}\frac{2n+3+2\exp(2nC_n)}{1.43n+2.43+1.43\exp(1.43nC_n)}$$ are increasing with respect to $\ell$ and $n$ respectively. Then
	\begin{equation*}
		\begin{aligned}
			\left(\frac{\alpha\ell-1}{B_{n,\alpha}}\right)\bigg/\left(\frac{2\ell-1}{B_n}\right)&=\frac{1.43\ell-1}{2\ell-1}\frac{2n+3+2\exp(2nC_n)}{1.43n+2.43+1.43\exp(1.43nC_n)}\\
			&\ge\frac{1.43-1}{2-1}\frac{7+2e^4}{5.29+1.43e^{2.86}}\\
			&>1.65.
		\end{aligned}
	\end{equation*}
	Therefore
	\begin{equation*}
		\left(1+\frac{\alpha\ell-1}{B_{n,\alpha}}\right)\vol(\mathbb{S}^n)>\left(1+\frac{1.65(2\ell-1)}{B_n}\right)\vol(\mathbb{S}^n),
	\end{equation*}
	which proves the second inequality.
\end{proof}

\section{Proof of Theorem \ref{2nd}}
It is well known that the eigenvalue problem of Laplacian has a real and purely discrete spectrum
\begin{equation*}
	0<\lambda_1<\lambda_2\le\lambda_3\le\cdots\rightarrow\infty.
\end{equation*}
Here each eigenvalue is repeated from its multiplicity. Cheng-Yang \cite{ChY} proved the following estimate of the eigenvalues.
\begin{thm}[Cheng-Yang \cite{ChY}]\label{ChY}
	Let $M^n$ be an $n$-dimensional compact minimal submanifold without boundary in a unit sphere $\mathbb{S}^N(1)$. If $\lambda_i,~i=1,2,\cdots,$ is the $i$-th eigenvalue of $\tri u=-\lambda u$ on $M^n$, then
	\begin{equation*}
		\lambda_{k+1}\le D_nk^{\frac{2}{n}}\lambda_1,
	\end{equation*}
	where $D_n\le n+4$ is a positive constant only depended on $n$.
\end{thm}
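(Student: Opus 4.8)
The plan is to run the classical Payne--P\'olya--Weinberger/Yang commutator argument, exploiting the minimality of the immersion to turn the ambient coordinate functions into efficient universal test functions. By Takahashi's theorem \cite{Takahashi} the restrictions $x_1,\dots,x_{N+1}$ to $M^n$ of the coordinate functions of $\mathbb{R}^{N+1}$ satisfy $\tri x_\alpha=-nx_\alpha$, and since $M\subset\mathbb{S}^N(1)$ we have the two pointwise identities $\sum_\alpha x_\alpha^2=1$ and $\sum_\alpha|\nabla x_\alpha|^2=n$, the latter obtained by applying $\tri$ to the former and using $\tri x_\alpha=-nx_\alpha$. These three relations are exactly what is needed to make the $x_\alpha$ into good trial functions.

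First I would fix an $L^2(M)$-orthonormal basis of eigenfunctions $u_0,u_1,\dots$ with $\tri u_i=-\lambda_i u_i$ ($u_0$ constant, $\lambda_0=0$, and $\lambda_1<\lambda_2\le\cdots$ the listed positive eigenvalues). For each coordinate index $\alpha$ and each $i\le k$ I form the trial function $\phi_{\alpha i}=x_\alpha u_i-\sum_{j=0}^{k}a_{\alpha ij}u_j$ with $a_{\alpha ij}=\int_M x_\alpha u_i u_j$, so that $\phi_{\alpha i}$ is orthogonal to $u_0,\dots,u_k$. The Rayleigh--Ritz characterization then gives $\lambda_{k+1}\|\phi_{\alpha i}\|^2\le\int_M|\nabla\phi_{\alpha i}|^2$. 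Expanding both sides through the commutator $\tri(x_\alpha u_i)=-nx_\alpha u_i+2\nabla x_\alpha\cdot\nabla u_i-\lambda_i x_\alpha u_i$ and introducing $b_{\alpha ij}=\int_M(\nabla x_\alpha\cdot\nabla u_i)u_j$ produces an inequality in the arrays $(a_{\alpha ij})$ and $(b_{\alpha ij})$, which are linked by the integration-by-parts relations $b_{\alpha ij}+b_{\alpha ji}=na_{\alpha ij}$ (from $\int\nabla x_\alpha\cdot\nabla(u_iu_j)=-\int(\tri x_\alpha)u_iu_j$) and $(\lambda_j-\lambda_i)a_{\alpha ij}=b_{\alpha ij}-b_{\alpha ji}$.

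Next I would sum over $\alpha$ and over $i=1,\dots,k$ and collapse the coordinate dependence using $\sum_\alpha x_\alpha^2=1$ and $\sum_\alpha|\nabla x_\alpha|^2=n$; this erases all explicit geometry and leaves a purely algebraic Yang-type universal inequality of the schematic form $\sum_{i=1}^{k}(\lambda_{k+1}-\lambda_i)^2\le\frac1n\sum_{i=1}^{k}(\lambda_{k+1}-\lambda_i)(4\lambda_i+n^2)$, the correction $n^2$ reflecting $\tri x_\alpha=-nx_\alpha$ rather than harmonicity. The final step is to convert this quadratic recursion into an explicit power law: treating $\lambda_{k+1}$ as the unknown, one solves the quadratic to bound it through $\Lambda_k=\frac1k\sum_{i\le k}\lambda_i$ and $T_k=\frac1k\sum_{i\le k}\lambda_i^2$, and then shows by induction on $k$, via the Cheng--Yang recursion lemma, that this forces $\lambda_{k+1}\le D_n k^{2/n}\lambda_1$. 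I expect this recursion-to-power-law passage, together with tracking the correction term so as to land exactly on $D_n\le n+4$---which at $k=1$ is essentially the ratio bound $\lambda_2\le(n+4)\lambda_1$ combined with $\lambda_1\le n$ (the coordinate functions being eigenfunctions of eigenvalue $n$ with $\int_M x_\alpha=0$)---to be the main obstacle; the commutator bookkeeping in the earlier steps is routine once the three minimality identities are in hand.
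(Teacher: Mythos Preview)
The paper does not supply a proof of Theorem~\ref{ChY}: the result is quoted directly from Cheng--Yang~\cite{ChY} and then invoked as a black box in the proof of Theorem~\ref{2ndd}. There is therefore no ``paper's own proof'' to compare your sketch against.

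That said, your outline does track the strategy of the original Cheng--Yang argument: the coordinate functions of the ambient $\mathbb{R}^{N+1}$ serve as trial functions via Takahashi's theorem together with the identities $\sum_\alpha x_\alpha^2=1$ and $\sum_\alpha|\nabla x_\alpha|^2=n$; the commutator and Rayleigh--Ritz bookkeeping produce a Yang-type quadratic inequality with the $n^2$ correction term coming from $\tri x_\alpha=-nx_\alpha$; and a recursion lemma then converts this into the power law in $k$. You also correctly flag the passage from the quadratic recursion to the explicit bound $D_n\le n+4$ as the step requiring care. One caution on that last point: the constant is not simply read off from the $k=1$ case as you suggest; in~\cite{ChY} the recursion lemma produces a shifted estimate (schematically $\lambda_{k+1}+\tfrac{n^2}{4}\le(1+\tfrac{4}{n})k^{2/n}(\lambda_1+\tfrac{n^2}{4})$), and absorbing the shift into a multiplicative constant on $\lambda_1$ is what ultimately yields the form quoted here.
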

We first prove the following result by using Theorem \ref{ChY}.
\begin{thm}\label{2ndd}
	Let $M$ be a compact minimally immersed submanifold of $\mathbb{S}^{n+\ell}$. Suppose the immersion is of maximal dimension, then there exist positive constants
	\begin{equation*}
		B_{n,\alpha}<\alpha n+\alpha+1+\alpha\exp(\alpha nC_n)
	\end{equation*}
	and constant $E_{n,\ell}=\alpha nC_n-\alpha nC_n(n+4)(n+2\ell)^{\frac{2}{n}}4^{\frac{1}{n}}$ with $\alpha=1.43$ such that
	\begin{enumerate}
		\item[(i)] if $n+\ell+1\le k\le n+2\ell$, then
			\begin{equation*}
				\vol(M)>\left(1+\frac{\alpha\ell-1+\alpha(n+\ell+2)e^{E_{n,\ell}}}{B_{n,\alpha}}\right)\vol(\mathbb{S}^n)
			\end{equation*}
		\item[(ii)] if $k\ge n+2\ell+1$, then
			\begin{equation*}
				\vol(M)>\left(1+\frac{2\alpha\ell-1}{B_{n,\alpha}}\right)\vol(\mathbb{S}^n).
			\end{equation*}
	\end{enumerate}
\end{thm}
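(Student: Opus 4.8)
The plan is to refine the heat-trace argument behind Proposition~\ref{thm6} by retaining, in the lower bound for $\Tr K$, a whole block of eigenvalues lying just above the value $n$, and to control the size of those eigenvalues through the Cheng--Yang estimate (Theorem~\ref{ChY}). Recall that Proposition~\ref{thm6} rests on the comparison $\Tr K(\tau)\le \frac{\vol(M)}{\vol(\mathbb{S}^n)}\Tr\bar K(\tau)$ coming from Proposition~\ref{thm5}, the trace bound of Proposition~\ref{lem3}, and the elementary observation $\Tr K(\tau)=\sum_{i\ge0}e^{-\lambda_i\tau}\ge 1+k\,e^{-n\tau}$, which keeps only the indices $i=0,1,\dots,k$ and uses $\lambda_i\le\lambda_k=n$. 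My improvement is to keep, in addition, the $n+\ell+2$ terms indexed by $i=k+1,\dots,k+n+\ell+2$.

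First I would take $k$ to be the \emph{largest} index with $\lambda_k=n$, so that $k\ge n+\ell+1$ exactly as in the proof of Theorem~\ref{Cheng Li Yau 1984 Heat equations}, and I would work at heat time $\tau=\alpha C_n$ (so that the rescaled variable $n\tau=\alpha nC_n$ matches the choice made for Theorem~\ref{1st}, and $\tau\ge1$ since $\alpha C_n\ge1.43$). In case (ii), where $k\ge n+2\ell+1$, no new input is needed: feeding $k\ge n+2\ell+1$ directly into Proposition~\ref{thm6} with $t=\alpha nC_n$ and simplifying $nC_n/t=1/\alpha$ gives $\frac{\vol(M)}{\vol(\mathbb{S}^n)}-1\ge (2\alpha\ell-1)/B_{n,\alpha}$ after clearing the factor $\alpha$, which is statement (ii). The real content is case (i).

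For case (i), with $n+\ell+1\le k\le n+2\ell$, I would bound the top of the extra block through the Cheng--Yang inequality applied at index $k+n+\ell+1$; since $n$ is an eigenvalue we have $\lambda_1\le n$, whence
\begin{equation*}
\lambda_{k+n+\ell+2}\le (n+4)(k+n+\ell+1)^{2/n}\lambda_1\le (n+4)(k+n+\ell+1)^{2/n}n.
\end{equation*}
Using $k\le n+2\ell$ together with the elementary inequality $2n+3\ell+1\le 2(n+2\ell)$ (valid precisely because $\ell\ge1$), the right-hand side is at most $(n+4)\bigl(2(n+2\ell)\bigr)^{2/n}n=(n+4)(n+2\ell)^{2/n}4^{1/n}n$, since $2^{2/n}=4^{1/n}$. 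Writing $\lambda^\ast$ for this quantity, monotonicity of the spectrum gives $\lambda_i\le\lambda^\ast$ for every $i\le k+n+\ell+2$, so each of the $n+\ell+2$ extra terms contributes at least $e^{-\lambda^\ast\tau}$ and
\begin{equation*}
\Tr K(\tau)\ge 1+k\,e^{-n\tau}+(n+\ell+2)\,e^{-\lambda^\ast\tau}.
\end{equation*}
Combining this with the upper bound $\frac{\vol(M)}{\vol(\mathbb{S}^n)}\bigl(1+(n+1)e^{-n\tau}+C_n\tau^{-1}e^{-n\tau}\bigr)$, multiplying through by $e^{n\tau}$, and noting that $(n-\lambda^\ast)\tau=\alpha nC_n\bigl(1-(n+4)(n+2\ell)^{2/n}4^{1/n}\bigr)=E_{n,\ell}$, I would obtain
\begin{equation*}
e^{\alpha nC_n}+k+(n+\ell+2)e^{E_{n,\ell}}\le \frac{\vol(M)}{\vol(\mathbb{S}^n)}\Bigl(e^{\alpha nC_n}+n+1+\tfrac1\alpha\Bigr).
\end{equation*}
Inserting $k\ge n+\ell+1$ and solving for $\frac{\vol(M)}{\vol(\mathbb{S}^n)}-1$ (again clearing the factor $\alpha$) yields exactly the gap $\bigl(\alpha\ell-1+\alpha(n+\ell+2)e^{E_{n,\ell}}\bigr)/B_{n,\alpha}$ of statement (i).

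The step I expect to be most delicate is the calibration in case (i): one must fix the size $n+\ell+2$ of the extra block so that, uniformly over the whole range $n+\ell+1\le k\le n+2\ell$, the Cheng--Yang index $k+n+\ell+1$ stays below $2(n+2\ell)$ — this is what forces the factor $4^{1/n}$ and collapses the exponent to the stated $E_{n,\ell}$ — while the block remains large enough to furnish the coefficient $n+\ell+2$. Keeping the count fixed, rather than letting it grow with $k$, is what makes the estimate clean and uniform; verifying that $\lambda^\ast$ is a genuine upper bound for the last retained eigenvalue for \emph{every} admissible $k$ (and that the borderline case $\ell=1$, $k=n+2\ell$ is still covered by equality) is the only place where both the hypothesis $k\le n+2\ell$ and the codimension bound $\ell\ge1$ are used. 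Everything else reduces to the same algebra as in the proof of Theorem~\ref{1st}.
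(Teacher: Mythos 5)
Your proposal is correct and follows essentially the same route as the paper: split the heat trace at the index $k$ with $\lambda_k=n$, retain an extra block of eigenvalues just above $n$, control it with the Cheng--Yang estimate under the hypothesis $k\le n+2\ell$, and evaluate at $t=\alpha nC_n$ (with case (ii) read off directly from Proposition \ref{thm6}). The only cosmetic difference is that you keep a block of exactly $n+\ell+2$ terms ending at index $k+n+\ell+2$, while the paper keeps the $k+1$ terms up to index $2k+1$ and then uses $k+1\ge n+\ell+2$; both reduce to the same index bound $2(n+2\ell)$ and hence to the same exponent $E_{n,\ell}$ and coefficient.
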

\begin{proof}
	By Proposition \ref{thm5}, the heat kernel
	\begin{equation*}
		K(x,x,t)=\sum_{i=0}^{\infty}e^{-\lambda_i t\phi_i^2(x)}
	\end{equation*}
	satisfies
	\begin{equation*}
		\sum_{i=0}^{\infty}e^{-\lambda_i t\phi_i^2(x)}\le\bar{K}(x,x,t).
	\end{equation*}
	However it is easy to see that $\bar{K}(x,x,t)$ is a constant function since $\mathbb{S}^n$ is a homogeneous manifold, therefore
	\begin{equation*}
		\int_{\mathbb{S}^n}\bar{K}(x,x,t)=\vol(\mathbb{S}^n)\bar{K}(x,x,t).
	\end{equation*}
	Together with Proposition \ref{lem3}, this gives
	\begin{equation*}
		\begin{aligned}
			\sum_{i=0}^{\infty}e^{-\lambda_i t}&\le\int_MK(x,x,t)~dx\\
			&\le\int_M\bar{K}(x,x,t)~dx\\
			&=V(M)\bar{K}(x,x,t)\\
			&=\frac{V(M)}{V(\mathbb{S}^n)}\int_{\mathbb{S}^n}\bar{K}(x,x,t)~dx\\
			&\le\frac{V(M)}{V(\mathbb{S}^n)}\left(1+(n+1)e^{-nt}+C_nt^{-1}e^{-nt}\right).
		\end{aligned}
	\end{equation*}
	Rescaling $t$ by $\frac{t}{\lambda}$ and noting that $\lambda_k=n$, we obtain
	\begin{equation*}
		\sum_{i=0}^{\infty}e^{-\frac{\lambda_i}{\lambda_k}t}\le\frac{\vol(M)}{\vol(\mathbb{S}^n)}\left(1+(n+1)e^{-t}+nC_nt^{-1}e^{-t}\right)
	\end{equation*}
	for $t\ge0$. Therefore, we obtain
	\begin{equation}\label{all}
		1+\sum_{i=1}^{k}e^{-\frac{\lambda_i}{\lambda_k}t}+\sum_{i=k+1}^{\infty}e^{-\frac{\lambda_i}{\lambda_k}t}\le\frac{\vol(M)}{\vol(\mathbb{S}^n)}\left(1+(n+1)e^{-t}+nC_nt^{-1}e^{-t}\right)
	\end{equation}
	for all $t\ge0$. On one hand, since $\lambda_i\le\lambda_k$ for $i\le k$, we have
	\begin{equation}\label{from0tok}
		1+\sum_{i=1}^{k}e^{-\frac{\lambda_i}{\lambda_k}t}\ge1+ke^{-t}.
	\end{equation}
	On the other hand, we have
	\begin{equation*}
		\begin{aligned}
			\sum_{i=k+1}^{\infty}e^{-\frac{\lambda_i}{\lambda_k}t}&\ge\sum_{i=k+1}^{2k+1}e^{-\frac{\lambda_i}{\lambda_k}t}
			\ge(k+1)\exp\left(-\frac{\lambda_{2k+1}}{\lambda_k}t\right)\\
			&\ge(k+1)\exp\left(-\frac{D_n}{n}(2k)^{\frac{2}{n}}\lambda_1t\right).
		\end{aligned}
	\end{equation*}
	Since the immersion is maximal dimension, the multiplicity of $n$ is at least $n+\ell+1$, hence $k\ge n+\ell+1$. Here, we split the condition $k\ge n+\ell+1$ into two cases. First, if $n+\ell+1\le k\le n+2\ell$, then we have
	\begin{equation}\label{fromk+1toinfty}
		\sum_{i=k+1}^{\infty}e^{-\frac{\lambda_i}{\lambda_k}t}\ge(k+1)\exp\left(-\frac{D_n}{n}(n+2\ell)^{\frac{2}{n}}\lambda_14^{\frac{1}{n}}t\right).
	\end{equation}
	Combining \eqref{from0tok} and \eqref{fromk+1toinfty}, we obtain
	\begin{equation*}
		\frac{\vol(M)}{\vol(\mathbb{S}^n)}\left(1+(n+1)e^{-t}+nC_nt^{-1}e^{-t}\right)\ge1+ke^{-t}+(k+1)\exp\left(-\frac{D_n}{n}(n+2\ell)^{\frac{2}{n}}\lambda_14^{\frac{1}{n}}t\right).
	\end{equation*}
	Let $t=\alpha nC_n$ with $\alpha=1.43$, then we obtain
	\begin{equation*}
		\begin{aligned}
			\frac{\vol(M)}{\vol(\mathbb{S}^n)}&\ge\frac{1+ke^{-t}+(k+1)\exp\left(-\frac{D_n}{n}(n+2\ell)^{\frac{2}{n}}\lambda_14^{\frac{1}{n}}t\right)}{1+(n+1)e^{-t}+nC_n\frac{1}{t}e^{-t}}\\
			&=\frac{e^t+k+(k+1)\exp\left(t-\frac{D_n}{n}(n+2\ell)^{\frac{2}{n}}\lambda_14^{\frac{1}{n}}t\right)}{e^t+(n+1)+nC_n\frac{1}{t}}\\
			&=\frac{e^{\alpha nC_n}+k+(k+1)\exp\left(\alpha nC_n-\alpha C_nD_n(n+2\ell)^{\frac{2}{n}}\lambda_14^{\frac{1}{n}}\right)}{e^{\alpha nC_n}+(n+1)+\frac{1}{\alpha}}\\
			&\ge\frac{e^{\alpha nC_n}+n+\ell+1+(n+\ell+2)\exp\left(\alpha nC_n-\alpha C_nD_n(n+2\ell)^{\frac{2}{n}}\lambda_14^{\frac{1}{n}}\right)}{e^{\alpha nC_n}+(n+1)+\frac{1}{\alpha}}\\
			&=1+\frac{\ell-\frac{1}{\alpha}+(n+\ell+2)\exp\left(\alpha nC_n-\alpha C_nD_n(n+2\ell)^{\frac{2}{n}}\lambda_14^{\frac{1}{n}}\right)}{e^{\alpha nC_n}+(n+1)+\frac{1}{\alpha}}.
		\end{aligned}
	\end{equation*}
	Since $\lambda_1<n$, then we obtain
	\begin{equation*}
		\begin{aligned}
			\frac{\vol(M)}{\vol(\mathbb{S}^n)}&>1+\frac{\ell-\frac{1}{\alpha}+(n+\ell+2)\exp\left(\alpha nC_n-\alpha nC_nD_n(n+2\ell)^{\frac{2}{n}}4^{\frac{1}{n}}\right)}{e^{\alpha nC_n}+(n+1)+\frac{1}{\alpha}}\\
			&=1+\frac{\alpha\ell-1+\alpha(n+\ell+2)e^{E_{n,\ell}}}{\alpha n+\alpha+1+\alpha e^{\alpha nC_n}},
		\end{aligned}
	\end{equation*}
	where we choose $D_n=n+4$ for simplicity and $E_{n,\ell}=\alpha nC_n-\alpha nC_n(n+4)(n+2\ell)^{\frac{2}{n}}4^{\frac{1}{n}}$. Hence,
	\begin{equation}\label{1intervel}
		\vol(M)\ge\left(1+\frac{\alpha\ell-1+\alpha(n+\ell+2)e^{E_{n,\ell}}}{\alpha n+\alpha+1+\alpha e^{\alpha nC_n}}\right)\vol(\mathbb{S}^n).
	\end{equation}
	Otherwise, if $k\ge n+2\ell+1$, then using Proposition \ref{thm6} we obtain
	\begin{equation*}
		n+2\ell+1\le k\le\frac{\vol(M)}{\vol(\mathbb{S}^n)}(e^t+n+1+nC_nt^{-1})-e^t,
	\end{equation*}
	which implies
	\begin{equation}\label{2intervel}
		\vol(M)\ge\left(1+\frac{2\alpha\ell-1}{\alpha n+\alpha+1+\alpha e^{\alpha nC_n}}\right)\vol(\mathbb{S}^n).
	\end{equation}
	Therefore, there exists a positive constant $B_{n,\alpha}<\alpha n+\alpha+1+\alpha\exp(\alpha nC_n)$ with $\alpha=1.43$ such that Theorem \ref{2ndd} is proved by \eqref{1intervel} and \eqref{2intervel}.
\end{proof}

\begin{rem}\label{compareremark}
	We compare the gap in Theorem \ref{2ndd} with the gap in Theorem \ref{1st} as following.
	\begin{itemize}
		\item[(i)] In the case $n+\ell+1\le k\le n+2\ell$, we have
			\begin{equation*}
				\begin{aligned}
					\left(\frac{\alpha\ell-1+\alpha(n+\ell+2)e^{E_{n,\ell}}}{B_{n,\alpha}}\right)\bigg/\left(\frac{\alpha\ell-1}{B_{n,\alpha}}\right)&=\frac{\alpha\ell-1+\alpha(n+\ell+2)e^{E_{n,\ell}}}{\alpha\ell-1}\\
					&=1+\frac{\alpha(n+\ell+2)e^{E_{n,\ell}}}{\alpha\ell-1}\\
					&>1+\frac{(n+\ell+2)e^{E_{n,\ell}}}{\ell}.
				\end{aligned}
			\end{equation*}
		\item[(ii)] In the case $k\ge n+2\ell+1$, we have
			\begin{equation*}
				\left(\frac{2\alpha\ell-1}{B_{n,\alpha}}\right)\bigg/\left(\frac{\alpha\ell-1}{B_{n,\alpha}}\right)=\frac{2\alpha\ell-1}{\alpha\ell-1}>\frac{2\alpha\ell}{\alpha\ell}=2.
			\end{equation*}
	\end{itemize}
	Hence, in both cases, the gaps in Theorem \ref{2ndd} is larger than the gap in Theorem \ref{1st}.
\end{rem}
Now we can give the proof of Theorem \ref{2nd}.
\begin{proof}[Proof of Theorem \ref{2nd}]
	Since $(n+2\ell)^{\frac{2}{n}},4^{\frac{1}{n}}>1$, we have
	\begin{equation*}
		\begin{aligned}
			E_{n,\ell}&= \alpha nC_n-\alpha nC_n(n+4)(n+2\ell)^{\frac{2}{n}}4^{\frac{1}{n}}\\
			&<\alpha nC_n-\alpha nC_n (n+4)\\
			&=-\alpha n(n+3)C_n.
		\end{aligned}
	\end{equation*}
	We claim that
	\begin{equation}\label{finaline}
		e^{-\alpha n(n+3)C_n}<\frac{\ell}{n+\ell+3}.
	\end{equation}
	In fact, if we fix $n$ and let $\ell$ vary, then the right-hand side tends to $1$ while the left-hand side is smaller than $0.1$. On the other hand, as $n$ tends to infinity, the left-hand side tends to $0$ much faster than the right-hand side. Hence, \eqref{finaline} is valid. Then we have
	\begin{equation*}
		(n+\ell+2)e^{E_{n,\ell}}<\ell,
	\end{equation*}
	which gives that $2\alpha\ell-1\ge\alpha\ell-1+\alpha(n+\ell+2)e^{E_{n,\ell}}$, then we have
	\begin{equation*}
		\vol(M)>\left(1+\frac{\alpha\ell-1+\alpha(n+\ell+2)e^{E_{n,\ell}}}{B_{n,\alpha}}\right)\vol(\mathbb{S}^n),
	\end{equation*}
	which is independent of the value of $k$. Also, Remark \ref{compareremark} (i) gives the proof of the second inequality.
\end{proof}


\end{document}